\newtheorem{theorem}{Theorem}[section]
\newtheorem{proposition}[theorem]{Proposition}
\newtheorem{definition}[theorem]{Definition}
\newtheorem{corollary}[theorem]{Corollary}
\newtheorem{remark}[theorem]{Remark}
\numberwithin{equation}{section}
\numberwithin{equation}{section}
\numberwithin{figure}{section}
\theoremstyle{plain}
\newtheorem{thm}{\protect\theoremname}
\theoremstyle{definition}
\newtheorem{defn}[thm]{\protect\definitionname}
\theoremstyle{plain}
\newtheorem{prop}[thm]{\protect\propositionname}
\theoremstyle{plain}
\newtheorem{cor}[thm]{\protect\corollaryname}
\theoremstyle{remark}
\theoremstyle{plain}
\providecommand{\corollaryname}{Corollary}
\providecommand{\definitionname}{Definition}
\providecommand{\lemmaname}{Lemma}
\providecommand{\propositionname}{Proposition}
\providecommand{\remarkname}{Remark}
\providecommand{\theoremname}{Theorem}
\newcommand{\md}{{\mod{}}}
\newcommand{\ii}{{\mathrm i}}
\newcommand{\ee}{{\mathrm e}}
\newcommand{\ZZ}{{\mathbb Z}}
\newcommand{\CC}{{\mathbb C}}
\newcommand{\PP}{{\mathbb P}}
\newcommand{\pp}{{\mathcal P}}
\newcommand{\HH}{{\mathcal H}}
\begin{document}
\global\long\def\AA{\mathbb{A}}%
\global\long\def\CC{\mathbb{C}}%
 
\global\long\def\BB{\mathbb{B}}%
 
\global\long\def\PP{\mathbb{P}}%
 
\global\long\def\QQ{\mathbb{Q}}%
 
\global\long\def\RR{\mathbb{R}}%
 
\global\long\def\FF{\mathbb{F}}%

\global\long\def\DD{\mathbb{D}}%
 
\global\long\def\NN{\mathbb{N}}%
\global\long\def\ZZ{\mathbb{Z}}%
 
\global\long\def\HH{\mathbb{H}}%

\global\long\def\Gal{{\rm Gal}}%
\global\long\def\GG{\mathbb{G}}%
 
\global\long\def\UU{\mathbb{U}}%

\global\long\def\bA{\mathbf{A}}%

\global\long\def\kP{\mathfrak{P}}%
 
\global\long\def\kQ{\mathfrak{q}}%
 
\global\long\def\ka{\mathfrak{a}}%
\global\long\def\kP{\mathfrak{p}}%
\global\long\def\kn{\mathfrak{n}}%
\global\long\def\km{\mathfrak{m}}%

\global\long\def\cA{\mathfrak{\mathcal{A}}}%
\global\long\def\cB{\mathfrak{\mathcal{B}}}%
\global\long\def\cC{\mathfrak{\mathcal{C}}}%
\global\long\def\cD{\mathcal{D}}%
\global\long\def\cH{\mathcal{H}}%
\global\long\def\cK{\mathcal{K}}%

\global\long\def\cF{\mathcal{F}}%
 
\global\long\def\cI{\mathfrak{\mathcal{I}}}%
\global\long\def\cJ{\mathcal{J}}%

\global\long\def\cL{\mathcal{L}}%
\global\long\def\cM{\mathcal{M}}%
\global\long\def\cN{\mathcal{N}}%
\global\long\def\cO{\mathcal{O}}%
\global\long\def\cP{\mathcal{P}}%
\global\long\def\cQ{\mathcal{Q}}%

\global\long\def\cR{\mathcal{R}}%
\global\long\def\cS{\mathcal{S}}%
\global\long\def\cT{\mathcal{T}}%
\global\long\def\cV{\mathcal{V}}%
\global\long\def\cW{\mathcal{W}}%

\global\long\def\kBS{\mathfrak{B}_{6}}%
\global\long\def\kR{\mathfrak{R}}%
\global\long\def\kU{\mathfrak{U}}%
\global\long\def\kUn{\mathfrak{U}_{9}}%
\global\long\def\ksU{\mathfrak{U}_{7}}%

\global\long\def\a{\alpha}%
 
\global\long\def\b{\beta}%
 
\global\long\def\d{\delta}%
 
\global\long\def\D{\Delta}%
 
\global\long\def\L{\Lambda}%
 
\global\long\def\g{\gamma}%
\global\long\def\om{\omega}%

\global\long\def\G{\Gamma}%
 
\global\long\def\d{\delta}%
 
\global\long\def\D{\Delta}%
 
\global\long\def\e{\varepsilon}%
 
\global\long\def\k{\kappa}%
 
\global\long\def\l{\lambda}%
 
\global\long\def\m{\mu}%

\global\long\def\o{\omega}%
 
\global\long\def\p{\pi}%
 
\global\long\def\P{\Pi}%
 
\global\long\def\s{\sigma}%

\global\long\def\S{\Sigma}%
 
\global\long\def\t{\theta}%
 
\global\long\def\T{\Theta}%
 
\global\long\def\f{\varphi}%
 
\global\long\def\ze{\zeta}%

\global\long\def\deg{{\rm deg}}%
 
\global\long\def\det{{\rm det}}%

\global\long\def\Dem{Proof: }%
 
\global\long\def\ker{{\rm Ker}}%
 
\global\long\def\im{{\rm Im}}%
 
\global\long\def\rk{{\rm rk}}%
 
\global\long\def\car{{\rm car}}%
\global\long\def\fix{{\rm Fix( }}%

\global\long\def\card{{\rm Card }}%
 
\global\long\def\codim{{\rm codim}}%
 
\global\long\def\coker{{\rm Coker}}%

\global\long\def\pgcd{{\rm pgcd}}%
 
\global\long\def\ppcm{{\rm ppcm}}%
 
\global\long\def\la{\langle}%
 
\global\long\def\ra{\rangle}%

\global\long\def\Alb{{\rm Alb}}%
 
\global\long\def\Jac{{\rm Jac}}%
 
\global\long\def\Disc{{\rm Disc}}%
 
\global\long\def\Tr{{\rm Tr}}%
 
\global\long\def\Nr{{\rm Nr}}%

\global\long\def\NS{{\rm NS}}%
 
\global\long\def\Pic{{\rm Pic}}%

\global\long\def\Km{{\rm Km}}%
\global\long\def\rk{{\rm rk}}%
\global\long\def\Hom{{\rm Hom}}%
 
\global\long\def\End{{\rm End}}%
 
\global\long\def\aut{{\rm Aut}}%
 
\global\long\def\SSm{{\rm S}}%

\global\long\def\psl{{\rm PSL}}%
 
\global\long\def\cu{{\rm (-2)}}%
 
\global\long\def\mod{{\rm \,mod\,}}%
 
\global\long\def\cros{{\rm Cross}}%
 
\global\long\def\nt{z_{o}}%

\global\long\def\co{\mathfrak{\mathcal{C}}_{0}}%

\global\long\def\ldt{\Lambda_{\{2\},\{3\}}}%
 
\global\long\def\ltd{\Lambda_{\{3\},\{2\}}}%
\global\long\def\lldt{\lambda_{\{2\},\{3\}}}%

\global\long\def\ldq{\Lambda_{\{2\},\{4\}}}%
 
\global\long\def\lldq{\lambda_{\{2\},\{4\}}}%

\title[Modular curves $X_1(n)$ ]{Modular curves $X_1(n)$ as moduli spaces of point arrangements and applications}

\subjclass[2000]{Primary: 14N20, Secondary 14G35, 11G05, 14K10, 11Fxx}
\author{Lev Borisov, Xavier Roulleau}
\begin{abstract}
For a complex elliptic curve $E$ and a point $p$ of order $n$ on it, the images of the points $p_k=kp$  under the Weierstrass embedding of $E$ into $\CC\PP^2$ are collinear if and only if the sum of indices is divisible by $n$. Thus, it provides a realization of a certain matroid. We study this matroid in detail and prove that its realization space is isomorphic (over $\CC$) to the modular curve $X_1(n)$, provided $n\geq 10$, which also provides an integral model of $X_1(n)$. In the process, we find a connection to the classical Ceva and B\"or\"oczky examples of special point and line configurations. We also discuss the situation for smaller values of $n$.
\end{abstract}

\maketitle

\section{Introduction}

For $n>4$ coprime to the characteristic of the base field, modular
curves $X_{1}(n)$ were defined as the
compactifications of the moduli spaces of pairs $(E,t)$, 
where $t$ is a point of order $n$ on an elliptic curve
$E$. The first
constructions of these curves were obtained over $\CC$ 
 by taking the compactification of the quotient
$\cH/\G_{1}(n)$ of the upper half-plane
by the modular group $\G_{1}(n)$. 

\smallskip
Deligne and Rapoport in \cite{Deligne-Rapoport} constructed these
curves as representation of functors, using Deligne-Mumford stacks.
Another approach was done by Katz-Mazur in \cite{Katz-Mazur} using
Drinfeld modules. These two works have been unified by Conrad in \cite{Conrad}
using Artin stacks. The main difficulties of the constructions of
$X_{1}(n)$ are to find a model over the integers, to have
a geometric interpretation of the points at the compactification (the
cusps), and a geometric moduli interpretation of their reduction modulo
$p$ when $p$ divides $n$. 

\smallskip
We propose in this paper to give a geometric construction of the modular
curves as (compactifications of) moduli spaces of realizations of certain matroids, 
i.e. as moduli spaces of line or points arrangements. Here a line
(respectively a point) arrangement in the projective plane is a finite
set of lines (respectively points). Such arrangement is said labeled
if there is a set parametrizing its elements. 

\smallskip
Let us recall that a matroid $\cT$ (of rank $3$) is a combinatorial
structure which formally describes the intersection relations
between the elements of a labeled line arrangement, or in the dual
setting, $\cT$ encodes the alignments between the points of a labeled
point arrangement. The word ``formally'' here means that that given
a matroid such a line (or point) arrangement does not always exist,
and a matroid $\cT$ (of rank $3$) is said realizable over a field
$K$ if there is indeed a labeled line (or point) arrangement over
$K$ which has the same combinatorial data as $\cT$. There is a
well-defined notion of moduli space of realizations of a matroid,
which moduli space is a scheme naturally defined over $\ZZ$, the
realizations over the field $K$ being obtained by taking the fiber
over $\mathrm{Spec}(K)$. 

\smallskip
To a pair $(E,t)$ of an elliptic curve and an element
$t$ of order $n$, let us associate the labeled point arrangement
\[\cP_{E,t}=(kt)_{k\in\ZZ/n\ZZ},
\]
where we fix an embedding $\iota:E\hookrightarrow \PP^2$ sending 
 the group identity to a flex point on the image and where we identify a point with its image.
 Up to projective transformations, $\cP_{E,t}$ does not 
 depend on the choice of $\iota$.
Three distinct points $s_{1}=it,s_{2}=jt,s_{3}=kt$ ($i,j,k\in\ZZ/n\ZZ$)
on $\cP_{E,t}$ lie on a line if and only if $s_{1}+s_{2}+s_{3}=0$
on the elliptic curve $E$, thus if and only if $i+j+k=0$. Accordingly,
the set of triples $\{i,j,k\}\subset\ZZ/n\ZZ$ such that $i+j+k=0$
defines the so-called non-bases of a matroid $\cT_{n}$, and the
point arrangement $\cP_{E,t}$ is a realization of $\cT_{n}$. Let
$\cR(\cT_{n})$ be the moduli space of realizations of $\cT_{n}$:
this is the quotient of the space of all realization $\cP=(p_{k})_{k\in\ZZ/n\ZZ}$
of $\cT_{n}$ by the action of the projective transformations $\g$
such that $\g.\cP=(\g p_{k})_{k\in\ZZ/n\ZZ}$. We denote the class
of $\cP$ in $\cR(\cT_{n})$ by $[\cP]$, so that $[\cP]=\{\g.\cP\,|\,\g\in PGL_{2}\}$.
We obtain:
\begin{thm}
\label{thm:MAIN-Y}Suppose that the base field is an algebraically
closed field of characteristic not dividing $n$. For $n\geq7$, the
map $\phi:X_{1}(n)^\circ\to\cR(\cT_{n})$, $(E,t)\to[\cP_{E,t}]$
is birational, where $X_{1}(n)^\circ$ is the complement of the cusps in $X_{1}(n)$.
\end{thm}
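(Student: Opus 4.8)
The plan is to produce an inverse rational map to $\phi$ by reconstructing the pair $(E,t)$ from a realization $\cP=(p_k)_{k\in\ZZ/n\ZZ}$ of $\cT_n$, and then to check the two composites are the identity on dense open sets. First I would argue that a generic realization lies on a unique smooth plane cubic $C$: the triples summing to $0$ impose enough collinearity conditions that the $p_k$ cannot be in general position once $n\geq 7$, and a dimension count on the space of cubics through the points (together with the matroid's incidence data) shows that a unique cubic passes through them and that, generically, this cubic is smooth and irreducible. The collinearity $p_i+p_j+p_k$ on the line precisely when $i+j+k\equiv 0$ then forces the group law on $C$ (with $p_0$ as origin, which is a flex by the $i=j=k=0$ relation $3p_0$ collinear, i.e. $p_0$ is an inflection point) to satisfy $p_k=k\cdot p_1$, so that $t:=p_1$ is a point of order dividing $n$ on $E:=(C,p_0)$. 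One checks the order is exactly $n$ generically because if it were a proper divisor the points $p_k$ would collapse, contradicting that $\cP$ is an honest realization with distinct points. This gives a rational map $\psi:\cR(\cT_n)\dashrightarrow X_1(n)^\circ$, $[\cP]\mapsto (E,p_1)$, well-defined since the cubic and its group structure are canonical up to the $PGL_3$-action we quotiented by.

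Next I would verify $\psi\circ\phi=\mathrm{id}$ on a dense open of $X_1(n)^\circ$: starting from $(E,t)$, the arrangement $\cP_{E,t}$ lies on the image cubic $\iota(E)$, which for generic $(E,t)$ is smooth, so the reconstruction recovers exactly $(\iota(E),\iota(p_0))\cong(E,O)$ with marked point $\iota(t)\leftrightarrow t$; this is immediate from the construction of $\phi$. Conversely $\phi\circ\psi=\mathrm{id}$ on a dense open of $\cR(\cT_n)$: given generic $[\cP]$, we reconstructed a cubic $C$ with the $p_k$ equal to the multiples of $p_1$, and $\phi$ applied to $(C,p_1)$ re-embeds this data as $\cP_{C,p_1}$, which is projectively equivalent to the original $\cP$ because both are the multiples-of-$p_1$ configuration on the same abstract $(E,t)$ under a Weierstrass-type embedding sending the origin to a flex — and any two such embeddings differ by a projective transformation, as already noted in the excerpt. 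Hence $\phi$ and $\psi$ are mutually inverse rational maps, so $\phi$ is birational.

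The main obstacle is the first step: showing that a \emph{generic} realization of $\cT_n$ actually lies on a (unique, smooth, irreducible) cubic, rather than on a reducible or degenerate curve, or failing to lie on any cubic at all with the points still satisfying all the prescribed collinearities combinatorially. For $n\geq 7$ one expects the abundance of collinearity relations to pin the points down, but making this rigorous requires understanding the matroid $\cT_n$ well enough — in particular ruling out "extra" collinearities that could force the configuration into a line or conic component, and confirming the realization space is nonempty and irreducible of the expected dimension $1$. A secondary delicate point is the precise value $n\geq 7$: one must check that below this threshold the cubic need not be unique or the group-theoretic reconstruction of $t$ may fail to have exact order $n$, whereas at $n=7$ the count just works; isolating why $7$ is the cutoff (as opposed to the $n\geq 10$ needed for the isomorphism statement in the abstract) is where the combinatorics of $\cT_n$ must be invoked carefully. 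I would also need to confirm that the moduli space of realizations is taken in the stacky/scheme sense so that "birational" is meaningful, i.e. that $\cR(\cT_n)$ is a genuine curve and not, say, empty or lower-dimensional, which again reduces to a rank and dimension computation on $\cT_n$.
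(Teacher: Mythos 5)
Your outline — reconstruct a cubic through the points, use the group law to identify $p_k=k\cdot p_1$, and thereby invert $\phi$ — is exactly the strategy the paper uses for $n\geq 10$. But the step you yourself flag as ``the main obstacle'' is the actual mathematical content of the theorem, and the argument you sketch for it does not deliver. A ``dimension count'' on cubics through the $p_k$ proves nothing here, because the points are very far from general position: one must show the evaluation map of the ten cubic monomials at the configuration has rank $9$. The paper does this by normalizing $p_0,\dots,p_3$ to the standard frame, solving the collinearity conditions to write $p_{-4},\dots,p_5$ as explicit rational functions of two parameters $(s,t)$, and then exhibiting a $9\times 9$ minor of the $9\times 10$ matrix of monomial values at $p_{-3},\dots,p_5$ that equals $s^8(1-t+st)/\bigl((s-1)^4(1+s-t)^2(t-1)\bigr)$, which is nonzero by the non-basis conditions. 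This computation needs the indices $-4,\dots,5$ to be distinct mod $n$, i.e.\ $n\geq 10$. So even a completed version of your argument cannot reach $n=7,8,9$: the paper handles those cases by separately computing $\cR(\cT_n)$ as an explicit one-parameter family and comparing it with the universal elliptic curve with a point of order $n$. Your guess that ``at $n=7$ the count just works'' is not how the cutoff arises.

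Two further points need repair. First, the non-bases of $\cT_n$ are triples of \emph{distinct} elements, so there is no ``$i=j=k=0$'' relation and you cannot read off that $p_0$ is a flex from the matroid; in the paper this comes out of the explicit equation of the reconstructed cubic (the line $sx_2=tx_3$ is triple-tangent at $p_0$), or alternatively from combining several relations $f(i)+f(j)+f(-i-j)=0$ to show $k\mapsto p_k$ is affine and hence $3p_0=0$. Second, the cubic is determined by only nine of the points, so you must still argue that \emph{all} $n$ points lie on it and equal the group-law multiples of $p_1$; the paper does this by observing that $p_{k+1}$ is forced, inductively, to be the intersection of two lines through points of $\{p_{-k},\dots,p_k\}$, which is the same recipe that computes $(k+1)p_1$ on the cubic. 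Without these ingredients the proposed inverse map $\psi$ is not actually constructed, so the proposal as written has a genuine gap rather than being a complete alternative proof.
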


For $n\geq7$, there exists a natural compactification $\overline{\cR(\cT_{n})}$
of $\cR(\cT_{n})$ and the map $\phi$ extends to a morphism 
$$\phi:X_{1}(n)\to\overline{\cR(\cT_{n})}.$$
Let us suppose that the base field is $\CC$, and let 
$$\pi:\cH^{*}\to X_{1}(n)=\cH^{*}/\G_{1}(n),\,\tau\mapsto\G_{1}(n)\tau$$
be the  quotient map.

\begin{thm}
\label{thm:MAIN-2} Let be $n\geq 10$. The map $\phi$ is an isomorphism
and the curve $\overline{\cR(\cT_{n})}$ is contained in a projective
space $\PP$ of dimension $\left\lfloor \tfrac{n-3}{2}\right\rfloor $.
The composite map $\cH^{*}\to X_{1}(n)\to\PP$ is obtained by weight
one modular forms for the group $\G_{1}(n)$.
\end{thm}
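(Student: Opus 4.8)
The plan is to show that the morphism $\phi\colon X_1(n)\to\overline{\cR(\cT_n)}\subseteq\PP$ (the extension of the map in Theorem~\ref{thm:MAIN-Y}) is a closed immersion. Since $X_1(n)$ is a smooth projective curve, $\phi$ is automatically proper, so it suffices to check that $\phi$ is injective on points and unramified, i.e.\ that $d\phi$ is injective at every point; then $\phi$ is a closed immersion and its image is a smooth projective curve isomorphic to $X_1(n)$, which by the birationality in Theorem~\ref{thm:MAIN-Y} must be all of $\overline{\cR(\cT_n)}$. Hence $\phi$ is an isomorphism. Identifying the linear span $\PP$ and the coordinate forms is then a separate, more computational, task.

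Over $X_1(n)^\circ$ I would use rigidity of the configurations $\cP_{E,t}$. For $n\ge 10$ the $n$ torsion points $kt$ on the Weierstrass cubic $E\subset\PP^2$ are distinct, at most three are collinear, and at most six lie on any conic (a conic meets the smooth cubic $E$ in only six points); a short B\'ezout argument then shows that $E$ is the unique plane cubic through them. Consequently, any $\gamma\in PGL_3$ carrying $\cP_{E,t}$ to $\cP_{E',t'}$ carries $E$ to $E'$, hence $p_0=0\cdot t$ to the marked flex of $E'$ and $t=1\cdot t$ to $t'$, so $(E,t)\cong(E',t')$; thus $\phi$ is injective on $X_1(n)^\circ$. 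The same argument over the dual numbers (the points stay distinct to first order) shows that $\phi$ is an immersion there. The apparent collision $\phi(E,t)=\phi(E,-t)$ is harmless, since $[-1]$ extends to $PGL_3$ and $(E,t)\cong(E,-t)$ already in $X_1(n)$.

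It remains to treat the cusps, i.e.\ the realizations whose associated cubic is singular. A cuspidal cubic has no $n$-torsion in the smooth locus when the characteristic does not divide $n$, so only N\'eron polygons and their reducible relatives (conic-plus-line, triangles, and so on) occur; I would enumerate these degenerate configurations explicitly. The nodal (one-gon) case yields the Ceva/Fermat point configurations and the reducible cases yield the B\"or\"oczky configurations, in accordance with the standard cusp description of $X_1(n)$; one then verifies that distinct cusps of $X_1(n)$ give non-isomorphic configurations and that $\phi$ is an immersion at each cusp, by a local computation with the corresponding Tate/N\'eron parametrization. Since configurations over $X_1(n)^\circ$ have a smooth associated cubic while boundary configurations do not, the two ranges are disjoint and $\phi$ is globally injective; combined with the immersion property, $\phi$ is a closed immersion and therefore an isomorphism onto $\overline{\cR(\cT_n)}$. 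I expect this boundary analysis---the enumeration of degenerate configurations, the precise match with the classical Ceva and B\"or\"oczky examples, and the multiplicity-one check at every cusp---to be the main obstacle.

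Finally, for the projective space and the modular forms: the class $[\cP_{E,t}]$ is recorded by $\lfloor(n-1)/2\rfloor=\lfloor(n-3)/2\rfloor+1$ projective coordinates, indexed by the pairs $\{k,-k\}$ with $k\not\equiv 0$ (the points $p_k$ and $p_{-k}$ being collinear with $p_0$). Pulling these back along $\cH^*\to X_1(n)$ gives, up to a common nonzero factor, the weight-one Eisenstein-type series $f_k(\tau)={\sum'_{(m,\ell)\in\ZZ^2}}(k/n+m\tau+\ell)^{-1}$ (Eisenstein summation), i.e.\ a suitable regularization of the Weierstrass $\zeta$-function at $k/n$, for $1\le k\le\lfloor(n-1)/2\rfloor$. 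One checks: each $f_k$ is a holomorphic weight-one modular form for $\G_1(n)$ (the transformation law from the quasi-periodicity of $\zeta$, holomorphy at the cusps from the $q$-expansion); the $f_k$ are linearly independent (distinct leading $q$-terms), so their span has dimension $\lfloor(n-3)/2\rfloor+1$; and $[f_1:\cdots:f_{\lfloor(n-1)/2\rfloor}]$ reconstructs $\cP_{E,t}$ up to projective equivalence, because on the Weierstrass cubic the slope of the line $\overline{p_ip_j}$ equals $2\bigl(\zeta((i+j)/n)-\zeta(i/n)-\zeta(j/n)\bigr)$, so these $\zeta$-values determine the configuration while the residual $PGL_3$-freedom is absorbed into projectivization. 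This identifies $\cH^*\to X_1(n)\to\PP$ with the map given by these weight-one forms and yields $\dim\PP=\lfloor(n-3)/2\rfloor$.
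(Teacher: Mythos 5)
Your interior argument (uniqueness of the plane cubic through the $n$ points by B\'ezout, hence rigidity of the pair $(E,t)$, hence injectivity and unramifiedness of $\phi$ on $X_1(n)^\circ$) is sound and close in spirit to what the paper does in Section \ref{sec:realiz}, where the unique cubic is exhibited via a nonvanishing $9\times 9$ minor and Proposition \ref{atmost1} bounds the tangent space of $\cR(\cT_n)$ by exactly the dual-numbers deformation you describe. Your identification of the coordinate forms is also essentially correct: your $f_k$ are, up to normalization, the log-derivatives $s_k=\theta_z(\frac kn,\tau)/\theta(\frac kn,\tau)$ of the paper, there are $\lfloor\frac{n-1}2\rfloor$ independent ones because $s_{-k}=-s_k$ (and $s_{n/2}=0$ for even $n$), and your slope formula $2\bigl(\zeta(\frac{i+j}n)-\zeta(\frac in)-\zeta(\frac jn)\bigr)$ is the germ of Propositions \ref{bkform} and \ref{akform}, which express the normalized coordinates $a_k,b_k$ of the configuration as explicit ratios of linear combinations of the $s_i$, with Proposition \ref{all} supplying the inverse linear relation.

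The genuine gap is at the cusps. You propose to prove that $\phi$ is a closed immersion along the boundary by enumerating all degenerate (N\'eron-polygon type) limit configurations and checking, cusp by cusp, that they are pairwise non-isomorphic and that $\phi$ is unramified there; you flag this as the main obstacle, and it is: the paper only computes these boundary limits for $n/\gcd(c,n)\in\{2,3,4\}$ (recovering the B\"or\"oczky and Ceva arrangements) and explicitly notes that the general case, when $s_6-s_3-s_2-s_1$ vanishes at the cusp, ``does not appear straightforward.'' Moreover, carrying out that program would be analyzing the compactification in $(\PP^2)^{n-4}$, which is not the $\overline{\cR(\cT_n)}\subset\PP^{\lfloor(n-3)/2\rfloor}$ of the theorem. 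The paper's mechanism is entirely different and bypasses all boundary case analysis: it invokes the Borisov--Gunnells theorem that the $s_a$ generate every modular form of weight at least $3$ for $\Gamma_1(n)$, so the weight-one system $[s_1:\dots:s_{\lfloor n/2\rfloor}]$ is a closed immersion of all of $X_1(n)$, cusps included (Proposition \ref{embedding}); $\overline{\cR(\cT_n)}$ is then by construction the closure of $\cR(\cT_n)$ inside this embedded curve, and the isomorphism is automatic. A second ingredient your sketch omits is Corollary \ref{cusponly}, i.e.\ that the denominator $s_6-s_3-s_2-s_1$ vanishes only at cusps, which is what allows one to pass between the affine matrix normalization of the configuration and the projective coordinates $s_k$. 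Without the generation theorem (or some substitute establishing very ampleness of this weight-one linear system), your plan does not yield the isomorphism at the cusps.
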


\smallskip
Borisov, Gunnels and Popescu proved in \cite{BG}, \cite{BG2} that certain weight $1$ 
Eisenstein series, obtained as log-derivatives of the Jacobi theta function at 
rational points, generate the ring of modular forms for $\Gamma_1(n)$ 
in sufficiently high degree 
and thus provide an embedding $X_1(n)\to \PP$.
When $n\geq 5$ is prime, one can even find explicit relations 
that cut out $X_1(p)$ scheme-theoretically in $\mathbb{P}^{\tfrac{p-3}{2}}$, see \cite{BGP}.

 In \cite{Voight}, Voight and Zureick-Brown study a generalization of
Petri's Theorem about the quadratic relations defining the compactification
of a curve $\cH/\G$, where $\cH$ is the upper-half plane and $\G$
is a modular group. In our situation, for a rank $3$ matroid $M$,
the ideal of the moduli space $\cR(M)$ of realizations of $M$ is
always generated by quadrics or linear forms with integral coefficients. 


\smallskip
Let us describe the structure of the paper. 
In Section \ref{sec:Action-of-},
we recall the basics of line arrangements and the notations of some operators acting 
on them defined in \cite{OSO}, which will be used later. 
We also recall the theory of matroids $\mathcal{T}$ and their
moduli space of realizations $\mathcal{R}(\mathcal{T})$. 
We then introduce the matroid $\mathcal{T}_n$ and the map $X_1(n)^\circ\to \mathcal{R}(\mathcal{T}_n)$, 
where $X_1(n)^\circ$ is the open sub-scheme parametrizing pairs $(E,t)$, where $E$ is
 an elliptic curve and $t$ is a $n$-torsion point, the complement of the cusps points in $X_1(n)$.

\smallskip
In Section \ref{sec:realiz}, we prove that $\mathcal{R}(\mathcal{T}_n)$ is one-dimensional and its generic point 
is the image of the map $\phi$.  
In Section \ref{sec:stu}, we prove several results on modular forms which are needed in the next section. In particular, we find explicit Eisenstein series for $\Gamma_1(n)$ which have zeros only at the cusps of $X_1(n)$. Their ratios could be used to study the cuspidal subgroup of the Jacobian of $X_1(n)$ (the finite subgroup generated by the differences of the cusps).
Section \ref{RelModFor} contains the proof of Theorem \ref{thm:MAIN-2}. 
Section \ref{Compact} studies two natural compactifications of $\mathcal{R}(\mathcal{T}_n)$.


\smallskip
In the ancillary files of the first arXiv version of this paper, one
can find a) the Magma algorithms used for computing the moduli space of realizations
of a matroid, and the equations of the curves $X_{1}(n)$ for low $n$, b) the Mathematica
 computations used in Sections 3, 5 and 6.

\smallskip
Finally let us observe that a similar construction may be obtained for the modular curve $X(n)$.
Also, using embedding of elliptic curves in $\mathbb{P}^{k-1}$ as degree $k$ curves,
 one may study 
the modular curves $X_1(n)$ as realization spaces of rank $k\geq 4$ matroids. 
That will be the subject of another paper.


\smallskip
\textbf{Acknowledgements.} The authors thank Lukas Kühne for useful
discussions on matroids. We used the computer algebra systems OSCAR
\cite{Oscar}, Magma \cite{Magma} and Mathematica \cite{Mathematica}. 
X.R. acknowledges support from Centre Henri Lebesgue
ANR-11-LABX-0020-01.

\section{\label{sec:Action-of-}Preliminaries on matroids, operators and cubic
curves}

\subsection{\label{subsec:Line-arrangement-and}Point and line arrangements}
A line arrangement $\cC=\ell_{1}+\dots+\ell_{n}$ is the union of
a finite number of distinct lines in $\PP^{2}$. A labeled line arrangement
$\cC=(\ell_{1},\dots,\ell_{n})$ is a line arrangement with a choice of ordering  of the lines. 

\smallskip
We denote by $\cD$ the duality operator
 which to a line arrangement $\cC$ associates the
set of the annihilators of the lines of $\cC$ in the dual projective plane $\check{\PP}^{2}$. Concretely, we fix coordinates
$x,y,z$ and then $\cD(\ell)=(a:b:c)$ for the line $\ell:\{ax+by+cz=0\}$, moreover $\cD(a:b:c)=\ell$. By duality, one also has a natural notion
of (labeled) point arrangements.

\smallskip
Recall that for a given line arrangement $\cC$ in $\PP^{2}$ and $m\geq 2$, an $m$-point of   $\cC$ is a point at which exactly $m$ lines
of $\cC$ meet. For $\mathfrak{m}$ a set of integers $m\geq 2$, 
let us denote by $\cP_{\mathfrak{m}}(\cC)$ the  union of the $m$-points
of $\cC$, over $m\in \mathfrak{m}$.  
For a given point arrangement $\cP$ and $n\geq 2$, we define 
$\cL_{\mathfrak{n}}(\cP)$
as the union of the lines that are $n$-rich, where an $n$-rich line is a line containing exactly $n$ points of
$\cP$. Of course, $\cP_{\mathfrak{m}}(\cC)$ and $\cL_{\mathfrak{n}}(\cP)$ could be empty. 
We denote by $\cD_{\mathfrak{n}}(\cL)$ the line arrangement $\cD(\cP_{\mathfrak{n}}(\cL))$ in the dual plane.

\smallskip
For $n\geq 2$, by abuse of notations, we denote by $\cL_{n}(\cP)$ the line arrangement $\cL_{\{\geq n\}}(\cP)$, and use a similar notation $\cP_{n}(\cC)$ or $\cD_{n}(\cC)$ for a line arrangement $\cC$.

\subsection{\label{sec:Matroids}Matroids}
Let us recall the following first definitions and results on matroids:

\smallskip
A matroid $M$ is a pair $M=(E,\cB)$, where $E$ is a finite set
of elements called atoms and the elements of $\cB$ are subsets of
$E$, called \textit{basis} (plural:  \textit{bases}),  subject to the following properties:\\
$\bullet$ $\cB$ is non-empty;\\
$\bullet$ if $A$ and $B$ are distinct members of $\cB$ and $a\in A\setminus B$,
then there exists $b\in B\setminus A$ such that $(A\setminus\{a\})\cup\{b\}\in\cB$. 

\smallskip
The bases have the same order $k$, called the rank of $(E,\cB)$.
An order $k$ subset of $E$ that is not a basis is called a \textit{non-basis}.


\smallskip
As we will be only concerned with line or point arrangements in
$\PP^{2}$, we now suppose that the rank of $M$ is $3$. If $m$
is the order of $E$, we may identify $E$ with the set $\{1,\dots,m\}$.

\smallskip
Matroids originated from the following kind of examples: If $\cC=(\ell_{1},\dots,\ell_{m})$
is a labeled line arrangement, the subsets $\{i,j,k\}\subset\{1,\dots,m\}$
such that the lines $\ell_{i},\ell_{j},\ell_{k}$ meet in three distinct
points are the bases of a matroid $M(\cC)$ over the set $\{1,\dots,m\}$.
We say that $M(\cC)$ is the matroid associated to $\cC$.  In the
dual setting, if $\cP=(p_{1},\dots,p_{m})$ is a point arrangement,
the order $3$ subsets $\{i,j,k\}\subset\{1,\dots,m\}$ such that
$p_{i},p_{j},p_{k}$ are not collinear are the bases of a matroid $M(\cP)$.
One has $M(\cD(\cP))=M(\cP)$. 

\smallskip
A realization (over some field, and when that exists) of a matroid
$M=(E,\cB)$ is a converse operation to the application $\cL\to M(\cL)$
or $\cP\to M(\cP)$: it is the data of a size $3\times m$ matrix
with non-zero columns $c_{1},\dots,c_{m}$, which are considered up
to a multiplication by a scalar (thus as points in the projective
plane, which points must be distinct), and such that any order $3$ subset $\{i_{1},i_{2},i_{3}\}$
of $E$ is a non-basis if and only if the size $3$ minor $|c_{i_{1}},c_{i_{2}},c_{i_{3}}|$
is zero. Then we denote by $\ell_{i}$ the line with normal vector
the point $c_{i}$. The labeled line arrangement $\cC=(\ell_{1},\dots,\ell_{m})$
is such that three lines $\ell_{i_{1}},\ell_{i_{2}},\ell_{i_{3}}$
meet at a common point if and only if $\{i_{1},i_{2},i_{3}\}$ is
a non-basis; it is a realization of $M(\cL)$. Dually, $\cP=(c_{i})_{1\leq i\leq m}$
is a point realization of $M(\cP)$.

\smallskip
The space of realizations $\kU(M)$ of a matroid $M$ is therefore
defined as follows: For $1\leq j\leq m$, let $x_{1,j},x_{2,j},x_{3,j}$
be the coordinates of the $j^{th}$ term in the product $(\PP^{2})^{m}$.
Consider the matrix $A=\left(x_{i,j}\right)_{1\leq i\leq3,1\leq j\leq m}$.
For a set $\mu=\{i,j,k\}$ of three elements in $E$, let $A_{\mu}$
be the $3\times3$ sub-matrix of $A$ obtained by taking columns $i,j,k$
and let $V_{\mu}$ be the sub-scheme of $(\PP^{2})^{m}$ defined by
$\det(A_{\mu})=0$. Consider 
\[
V(M)=\cap_{\mu\in NB}V_{\mu},
\]
where the product runs over the non-bases of $M$ and 
\[
W(M)=\cup_{\mu\in B}V_{\mu},
\]
where the union runs over the bases of $M$. The space of realizations
$\kU(M)$ of $M$ is the scheme 
\[
\kU(M)=V(M)\setminus W(M).
\]
We observe that this scheme $\kU(M)$ is naturally defined over $\ZZ$
as a sub-scheme of $(\PP_{\ZZ}^{2})^{m}$. 

\smallskip
Suppose that the realization space $\kU_{/k}$ is non-empty over some
field $k$. For a realization $\cP=(p_{1},\dots,p_{m})\in\kU(k)$
of $M$ and $\g\in PGL_{3}(k)$, the point arrangement $(\g p_{1},\dots,\g p_{m})$
is another realization. Let us denote by $[\cP]$ the orbit of $\cP$
under that action of $PGL_{3}$. The moduli space $\cR(M)$ of realizations
of $M$ is the quotient of $\kU$ by $PGL_{3}(k)$; we denote by 
\[
q:\kU(M)\to\cR(M),\,\cP\to[\cP]
\]
the quotient map. Suppose that there exist distinct elements $j_{1},\dots,j_{4}\in\{1,\dots,m\}$
such that every order $3$ subset in $j_{1},\dots,j_{4}$ is a basis
of the matroid $M$ (that will always be the case in this paper).
Then any class $[\cC]$ in $\cR(M)$ has a unique representative $\cC$
such that the points number $j_{1},\dots,j_{4}$ of $\cP$ are the
canonical basis
\begin{equation}\label{canbasis}
(1:0:0),\,(0:1:0),\,(0:0:1),\,(1:1:1).
\end{equation}
That shows that the quotient map has a section 
\[
s=s_{j_{1},\dots,j_{4}}:\cR(M)\to\kU(M),
\]
sending a class to the unique representative such that the vectors
number $j_{1},\dots,j_{4}$ are the canonical basis. Then one can
identify, although non-canonically, $\cR(M)$ with its image by $s$,
so that any element of $\cR(M)$ is not only a class but an actual realization. Let us fix such such a section: then $\cR(M)$ is the intersection
of $\kU(M)$ with the scheme defined by the ideal of the canonical
basis at the points number $j_{1},\dots,j_{4}$. We note that $s(\cR(M))$
then gets a scheme structure, defined over $\ZZ$. Let $j_{1}',\dots,j_{4}'\in\{1,\dots,m\}$
be four other elements such that such that every order $3$ subset
in $j_{1}',\dots,j_{4}'$ is a basis of the matroid $M$. Define $s_{1}=s_{j_{1},\dots,j_{4}},\,\,s_{2}=s_{j_{1}',\dots,j_{4}'}$.
There exists a unique $3\times3$ matrix $P$ with coprime coefficients
in 
\[
\ZZ[(x_{i,j})\,|\,1\leq i\leq3,\,j\in\{j_{1}',\dots,j_{4}'\}]
\]
sending the columns $c_{j_{1}'},\dots,c_{j_{4}'}$ to the canonical
basis, $P$ defines an isomorphism between $s_{1}(\cR(M))$ and $s_{2}(\cR(M))$,
thus $\cR(M)$ is well-defined over $\ZZ$.
\begin{defn}
Once a section $s:\cR(M)\to\kU(M)$ is fixed, we call the map $s\circ q$
the period map of $M$.
\end{defn}

Very often the first four elements of the matroid are such that every
order $3$ subset is a basis of the matroid $M$, so that, when speaking
of a period map, we implicitly refer to the section $s$ associated
to these four elements. We then moreover identify $\cR(M)$ with its
image by $s$.


\subsection{The matroid $\protect\cT_{n}$.}
For $n\geq3$, let $\cT_{n}$ be the matroid such that the atoms of
$\cT_{n}$ are the elements of $\ZZ/n\ZZ$ and the non-bases are the
order three subsets $\{i,j,k\}$ of $\ZZ/n\ZZ$ such that 
\[
i+j+k=0.
\]
The automorphism group of $\cT_{n}$ contains $(\ZZ/n\ZZ)^{*}$ since
the multiplication by an element of $(\ZZ/n\ZZ)^{*}$ preserves the
set of non-bases. 

\subsection{Cyclic torsion groups on elliptic curves and realizations of $\protect\cT_{n}$}
Let $E$ be an elliptic curve with neutral element $O$, and let $T$
be a cyclic torsion subgroup of order $n$. Fix a plane model of $E$
so that $O$ is a flex point. For $t,t',t''$, three distinct elements
of $T$, the points $t,t',t''$ are aligned if and only if 
\[
t+t'+t''=O.
\]
By taking a generator $t\in T$, one gets a labeling $\ZZ/n\ZZ\to T,\,k\to kt$ of $T$.
\begin{prop}
\label{prop:Real-Tn}The labeled point arrangement $\cP=(kt)_{k\in\ZZ/n\ZZ}$
associated to $(E,t)$ is a realization of the matroid $\cT_{n}$.
Up to projective automorphism, the point arrangement $\cP$ is independent
of the choice of the plane model of $E$ so that $O$ is a flex point.
\end{prop}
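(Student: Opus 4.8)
The plan is to verify two things: (1) that $\cP = (kt)_{k\in\ZZ/n\ZZ}$ really is a realization of $\cT_n$ in the precise sense of the definition (distinct points, and the minor/collinearity dictionary matches the non-bases exactly), and (2) that the arrangement is independent, up to $PGL_3$, of the plane model chosen with $O$ at a flex. First I would record that since $t$ has exact order $n$, the map $k\mapsto kt$ is a bijection $\ZZ/n\ZZ \to T$, so the $n$ points $kt$ are pairwise distinct points of $\PP^2$ under the embedding $\iota$; this is what is needed for a legitimate point arrangement. Then the heart of (1) is the classical fact that, for a plane cubic with a flex taken as the group-law origin $O$, three points $P,Q,R$ of $E$ (not necessarily distinct, counted with multiplicity, but here distinct) are collinear if and only if $P+Q+R = O$ in the group law. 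Applying this with $P = it$, $Q = jt$, $R = kt$ gives: $it,jt,kt$ collinear $\iff (i+j+k)t = O \iff i+j+k \equiv 0 \pmod n$, using again that $t$ has exact order $n$. Hence the triples $\{i,j,k\}$ that are collinear are exactly the non-bases of $\cT_n$, and by the definition of realization (the $3\times 3$ minor $|c_i,c_j,c_k|$ vanishes iff the three points are collinear iff $\{i,j,k\}$ is a non-basis), the matrix whose columns are homogeneous coordinates of the $kt$ is a realization of $\cT_n$.

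One technical point I would be careful about: the definition of realization also requires that the columns indexed by a basis give a nonvanishing minor, i.e. that non-collinear triples really are non-collinear — but that is just the contrapositive of the equivalence above, so nothing extra is needed. I should also note that one must check there is at least one basis, equivalently that not all of $E$'s $n$-torsion-multiples are collinear; for $n\ge 3$ this is clear since $0\cdot t, 1\cdot t, 2\cdot t$ are collinear only if $0+1+2\equiv 0$, and in general three generic multiples are not collinear, so $\cT_n$ has bases and $\cP$ lies in $\kU(\cT_n)$.

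For part (2), the independence statement, I would argue as follows. Any two plane models of $E$ with $O$ a flex arise from two embeddings $\iota_1,\iota_2 \colon E \hookrightarrow \PP^2$, each given by (a basis of) the complete linear system $|3O|$, since a smooth plane cubic is embedded by a degree-$3$ very ample divisor and placing $O$ at a flex is exactly the normalization $D = 3O$. Two such embeddings differ by an automorphism of the target $\PP^2$, because $h^0(3O) = 3$ forces the two bases of the same $3$-dimensional space $H^0(E,\mathcal{O}(3O))$ to be related by an element of $PGL_3$. Thus $\iota_2 = \g \circ \iota_1$ for some $\g\in PGL_3$, and consequently $\cP_2 = (\g\cdot \iota_1(kt))_k = \g\cdot \cP_1$, so $[\cP]$ is well defined in $\cR(\cT_n)$. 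The mildest subtlety here is the phrase ``plane model so that $O$ is a flex'': a priori one could place the origin at any one of the nine flexes, but translating $O$ to a different flex is itself induced by a translation automorphism of $E$ composed with re-embedding, which again only changes $\iota$ by something in $PGL_3$ (translations act on $|3O|$ up to the ambiguity of choosing the divisor class), so the orbit $[\cP]$ is unchanged; I would state this cleanly rather than belabor it.

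The main obstacle, such as it is, is purely expository: marshalling the classical ``collinear $\iff$ sums to $O$'' fact and the $|3O|$-embedding uniqueness in a way that is clean over an arbitrary algebraically closed field of characteristic not dividing $n$ (so that $E[n]$ is as expected and $t$ of order $n$ makes sense), rather than any genuine difficulty. No deep input is required beyond standard facts about plane cubics and their group law.
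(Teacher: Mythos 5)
Your proposal is correct and is essentially an expanded version of the paper's own (very terse) proof: the paper declares the first assertion clear and cites Silverman III.3.1 for the second, which is exactly the uniqueness-of-the-$|3O|$-embedding argument you spell out via $h^0(3O)=3$. The only superfluous bit is your worry about "which of the nine flexes": the group identity $O$ is a fixed point of $E$, and requiring $\iota(O)$ to be a flex already pins the hyperplane class down to $3O$, so no further case analysis is needed.
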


\begin{proof}
The first assertion is clear. The second assertion is a direct consequence
of \cite[III, Proposition 3.1]{Silverman}.
\end{proof}
As we will see in the next sections, conversely, for $n\geq7$, any
realization of $\cT_{n}$ comes from such torsion points on a (possibly
singular) cubic curve. Moreover we will obtain that, knowing $\mathcal{P}$, one may recover -up
to isomorphism- the cubic curve and the generator of the torsion group.
 
\smallskip
Let $X_{1}(n)^{\circ}\subset X_{1}(n)$ be the open subscheme of the
modular curve which parametrizes pairs $(E,t)$ of elliptic curves with
a torsion point of order $n$. Proposition \ref{prop:Real-Tn}  may be rephrased as:
\begin{cor}
There exists a morphism
\[
\phi_{n}:X_{1}(n)^{\circ}\to\cR(\cT_{n})
\]
which to a point $(E,t)$ associates the realization $(kt)_{k\in\ZZ/n\ZZ}$
up to projective automorphism.
\end{cor}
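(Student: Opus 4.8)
The plan is to verify the two matroid-theoretic facts underlying the construction and then package them into the stated morphism. First I would check that $\cP_{E,t}=(kt)_{k\in\ZZ/n\ZZ}$ really is a realization of $\cT_n$: by Proposition~\ref{prop:Real-Tn}, three distinct points $it,jt,kt$ on the plane cubic model of $E$ (with $O$ at a flex) are collinear if and only if $it+jt+kt=O$ in the group law, which happens if and only if $i+j+k\equiv 0\pmod n$. Since by definition the non-bases of $\cT_n$ are exactly the triples $\{i,j,k\}$ with $i+j+k=0$ in $\ZZ/n\ZZ$, this says precisely that the $3\times 3$ minor on columns $i,j,k$ vanishes if and only if $\{i,j,k\}$ is a non-basis, i.e. the column matrix of the $kt$ is a point in $V(\cT_n)\setminus W(\cT_n)=\kU(\cT_n)$. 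One must also confirm the $kt$ are pairwise distinct, which holds because $t$ has exact order $n$.

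Second, I would address well-definedness up to the $PGL_3$-action, so that the map lands in $\cR(\cT_n)=\kU(\cT_n)/PGL_3$ rather than in $\kU(\cT_n)$ itself. This has two parts. The plane model of $E$ with $O$ at a flex is not unique, but by \cite[III, Proposition 3.1]{Silverman} (cited already in the proof of Proposition~\ref{prop:Real-Tn}) any two such models differ by a projective automorphism of $\PP^2$, so the labeled arrangement $\cP_{E,t}$ is canonically defined as an element of $\kU(\cT_n)/PGL_3$. Likewise, an isomorphism of pairs $(E,t)\xrightarrow{\sim}(E',t')$ carries one flex model to another, hence induces a projective transformation matching the two arrangements; thus $[\cP_{E,t}]$ depends only on the isomorphism class $(E,t)$, i.e. on the point of $X_1(n)^\circ$. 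This gives a well-defined set-theoretic map $\phi_n\colon X_1(n)^\circ\to\cR(\cT_n)$.

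Third, I would upgrade this to a morphism of schemes. Over $X_1(n)^\circ$ there is (Zariski-locally, or after an \'etale cover) a universal elliptic curve with universal $n$-torsion section; choosing a Weierstrass equation with the neutral section at a flex gives a family of plane cubics, and the $n$ sections $k\cdot t$ define $n$ morphisms $X_1(n)^\circ\to\PP^2$, i.e. a morphism to $(\PP^2)^n$ whose image lies in $\kU(\cT_n)$ by the computation above; composing with the quotient map $q\colon\kU(\cT_n)\to\cR(\cT_n)$ and checking this is independent of the local choices (again by the $PGL_3$-ambiguity being resolved in the quotient) produces the morphism $\phi_n$. Concretely, one can use the section $s$ of the quotient map: fix four indices $j_1,\dots,j_4$ with every triple a basis of $\cT_n$ (for $n\geq 7$ such indices exist, e.g. small shifts of a nontorsion-generic choice), normalize those four points to the canonical basis \eqref{canbasis}, and read off the remaining coordinates as rational functions of the modular parameter; this exhibits $s\circ\phi_n$ as an honest morphism to the affine chart $s(\cR(\cT_n))\subset(\PP^2)^n$.

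The main obstacle I anticipate is purely bookkeeping rather than conceptual: making the ``choice of flex plane model'' canonical in families. A single elliptic curve has nine flexes (the $3$-torsion points of the chosen group structure), so the flex model is canonical only up to the finite ambiguity of which flex is sent to a standard position, and the universal object may only exist after a cover of $X_1(n)^\circ$; one must check that the $PGL_3$-orbit $[\cP_{E,t}]$ is genuinely insensitive to this, so that the map descends. Since moving $O$ to any flex is realized by a translation automorphism of $E$ composed with a projective transformation, and since all such ambiguities are absorbed into the $PGL_3$-quotient defining $\cR(\cT_n)$, the descent goes through; but this is the step requiring care. Everything else — the collinearity criterion, distinctness, and the scheme structure via the section $s$ — is routine.
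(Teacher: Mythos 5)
Your argument is correct and matches the paper's, which gives no separate proof at all: the corollary is presented as a direct rephrasing of Proposition~\ref{prop:Real-Tn}, whose proof already contains the two ingredients you verify (the collinearity criterion and the $PGL_3$-well-definedness via \cite[III, Proposition 3.1]{Silverman}). Your additional care about the scheme-theoretic/family formulation and the section $s$ is a reasonable elaboration of what the paper leaves implicit, not a different approach.
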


\section{Realization spaces of $\cT_n$.}\label{sec:realiz}
The aim of this section is to further study realization spaces of $\cT_n$ for $n\geq 5$. We will also be interested in their compactifications in $(\PP^2)^{n-4}$. For simplicity, we will work over the field of complex numbers. 

\smallskip
$\bullet$ Case $n=5$. We have points $p_0,\ldots,p_4$ with the non-bases $\{0,1,4\}, \{0,2,3\}$ only. This means that we can make $p_1,\ldots,p_4$ to be canonical basis \eqref{canbasis}, which then forces $p_0=(0,1,1)$, so the realization scheme is a point and $\phi_5$ is constant.

\begin{remark}It is possible to modify the matroid slightly to get an open subset of the modular curve $X_1(5)$. Namely,
consider the pair $(E,\a)$, where $E$ is an elliptic curve and $\a$
is a $5$-torsion element of  $E$. Up to projective transformation,
one may suppose that $E=E_{\t}$ is the elliptic curve 
\[
E_{\t}:\,\,y^{2}+(1-\t)xy-\t y=y^{3}-\t x^{2},\,\,(\t\in\PP^{1}\,\text{generic})
\]
and $\a=\a_{\t}=(0:0:1)$; $E_{\t}$ is the universal elliptic curve
with a point of $5$-torsion, see e.g. \cite{Baaziz}. 
Let us associate to $(E_{\t},\a)$ the labeled line arrangement $\cC=\cC_{\t}$
which is the union of the $5$ tangent lines to the points in $T=(k\a)_{k\in\ZZ/5\ZZ}$
and the two lines in $\cL_{2}(T)$ which are not the tangent lines,  
where $\cL_{2}(T)$ is the union of the lines that contain at least two points of $T$ (see Section \ref{subsec:Line-arrangement-and}).
Let $\cT_{5}'$ be the matroid with $7$ atoms
defined by the non-bases
\[
\{1,6,7\},\{2,3,7\},\{2,4,6\},\{3,5,6\},\{4,5,7\}.
\]
The realization space $\cR(\cT_{5}')$ is smooth, 1-dimensional; the seven
lines associated to a point in $\cR(\cT_{5}')$ have duals
the canonical basis of  $\PP^{2}$ and 
\[
(\t+1:1:1-\t^{2}),(\t+1:1:\t+1),(0:1:\t+1),
\]
for $\t\in\PP^{1},$ $t\notin\{0,\pm1\}$.

The line arrangement $\cC_{\t}$ is a realization of $\cT_{5}'$ and
a direct computation shows that the rational map $(E_{\t},\a_{\t})\to\cC_{\t}$
from $X_{1}(5)\simeq\PP^{1}$ to $\cR(\cT_{5}')$ has degree one. 
\end{remark}


$\bullet$ Case $n=6$. The matroid $\cT_6$ has non-bases $\{0,1,5\}$, $\{0,2,4\}$, $\{1,2,3\}$ and $\{3,4,5\}$. We can put the points $p_0,p_1,p_3,p_4$ in the standard form \eqref{canbasis}, which forces 
$$
p_2=(0,1,1),~p_5=(1,1,0).
$$
So  the realization scheme is again a point.

\begin{remark} As in the $n=5$ case, we can find a matroid description for $X_1(6)$.
The universal elliptic curve over $X_{1}(6)\simeq\PP^{1}$ is given
by 
\[
E_{\t}:\,\,y^{2}+(1-\t)xy-(\t^{2}+\t)y=x^{3}-(\t^{2}+\t)x^{2},\,\,\t\in\PP^{1}\text{ generic},
\]
(see e.g. \cite{Baaziz}, \cite{TY}). The point $\a=(0:0:1)$ has order $6$.
For $k\in\ZZ/6\ZZ$, let us denote by $p_{k\a}'$ the dual of the line tangent
to $E_{\t}$ at the point $k\a$; this is a point in the dual plane. 
The line arrangement $\cC_{\t}=\cL_{\{2\}}((p_{k\a}')_{k\in\ZZ/6\ZZ})$ (see notations in Section  \ref{subsec:Line-arrangement-and})
is the union of $15$ lines. We label these lines from $1$ to $15$
so that they correspond bijectively with the $15$ pairs $(0,1),(0,2),\dots,(4,5)$:
by that bijection the line corresponding to the pair $(i,j)$ is the
line through the points $p_{i\a}'$ and $p_{j\a}'$. 

Inspired by this, let $\cT_{6}'$ be the matroid with atoms
$\{1,\dots,15\}$ and non-bases all the order three subsets of the
following sets 
\[
\begin{array}{c}
\{1,12,13\},\{3,6,15\},\{3,8,12\},\{5,8,10\},\{1,2,3,4,5\},\{1,6,7,8,9\},\\
\{2,6,10,11,12\},\{3,7,10,13,14\},\{4,8,11,13,15\},\{5,9,12,14,15\}.
\end{array}
\]
One computes that the moduli space $\cR(\cT_{6}')$ of realizations
is $1$-dimensional. Corresponding to the above description of the
non-bases, a realization $\cC$ is a union of $15$ lines
which has four triple points and six $5$-points; moreover $\cC$
has $33$ double points. The six $5$-points of a realization $\cC_{\t}$
in $\cR(\cT_{6}')$ are 
\[
\cP_{\t}:\,\,(0:0:1),(0:1:0),(-1:1:1),(\t:0:1),(-\t:1:0),(-\t-1:1:1),
\]
for $\t\in\PP^{1}$ not in the zero loci of the polynomials
\[
x+1,\,x,\,2x+1,\,x-1,\,x^{2}+x+1,\,x^{2}-x-1.
\]
The line arrangement $\cL_{\{2\}}(\cP_{\t})$ is the realization $\cC_{\t}$
of $\cT_{6}'$. 
Then a direct computation
shows that the map 
\[
(E_{\t},\a)\to\cC_{\t}
\]
is a map from $X_{1}(6)^\circ$ to $\cR(\cT_{6}')$ and that this map has degree
$1$ (here $X_{1}(6)^\circ$ is the complement of the cusps). The curve $\cR(\cT_{6}')$ is smooth.
\end{remark}

$\bullet$ Case $n=7$. When $n\geq 7$, we can take the points $p_0,\ldots,p_3$ to be the canonical basis.
The moduli space $\cR(\cT_{7})$ of realizations
of $\cT_{7}$ is one dimensional and irreducible. The seven points
of a realization $\cC$ of $\cT_{7}$ are the four points of the canonical
base and the points
\[
(0:1:1),\,(1:0:t),\,(t-1,t,0),
\]
for $t\notin\{0,1\}$. The universal elliptic curve with $\ZZ/7\ZZ$-torsion
is 
\[
E_{t}:\,\,y^{2}+(1+t-t^{2})yx+(t^{2}-t^{3})y=x^{3}+(t^{2}-t^{3})x,
\]
and $\a=(0:0:1)$ has order $7$ (see e.g. \cite{TY}). Using the period map, one can check
that the map $\phi:X_{1}(7)^{\circ}\to\cR(\cT_{7})$ defined by 
\[
(E_{t},\a)\to(k\a)_{k\in\ZZ/7\ZZ}
\]
has degree $1$, where $X_{1}(7)^\circ$ is the complement of the cusps. 

\smallskip
$\bullet$ Case $n=8$. The non-bases of the matroid $\cT_{8}$ are
\[
\{0,1,7\},\{0,2,6\},\{0,3,5\},\{1,2,5\},\{1,3,4\},\{3,6,7\},\{4,5,7\}.
\]
One computes that the moduli space $\cR(\cT_{8})$ is one-dimensional
and irreducible; the eight points of a realization $\cP=(p_{i})_{i\in\ZZ/8\ZZ}$
of $\cT_{8}$ are 
\[
\begin{array}{c}(1:0:0),(0:1:0),(1:1:1),(0:0:1),\\[.3em]
(0:t:-1),(1:0:1),(1:t:t),(1:t:0)
\end{array}
\]
for $t\notin\{0,\pm1,\infty\}$. 
Using the universal elliptic curve with $8$-torsion elements (see \cite{Baaziz}), one can check
that the map $(E,\a) \in X_{1}(8)^\circ \to(k\a)_{k\in\ZZ/8\ZZ}\in\cR(\cT_{8})$ is well-defined and has degree one onto its image, 
 where $X_{1}(8)^\circ$ is the complement of the cusps. 



\smallskip
$\bullet$ Case $n=9$. Up to a projective transformation, the $9$ points of a realization of $\cT_9$ are 
$$
\begin{array}{c}
 (1:0:0), (0:1:0), (1:1:1), (0:0:1), (t:t:t-1), \\[.3em]
 (0:t:t-1), 
(1:0:1), (1:t:t), (1:t:0), 
\end{array}
$$
 for
 $t \notin \{0,1,\tfrac{1\pm \sqrt{-3}}{2}, \infty\}$.
 As for $n=7,8$, one may use the universal elliptic
curve to check that the map $\phi:X_{1}(9)^\circ\to\cR(\cT_{9})$ has
degree $1$. 

\smallskip
$\bullet$ Case $n\geq 10$. We will now study $\protect\cR(\protect\cT_{n})$ for $n\protect\geq10$.  
Here is an outline of our approach.

\smallskip
We begin by defining a sequence $(p_{k})_{k\in\ZZ}$ 
of points in $\PP^{2}$ such that $p_{0},p_{1},p_{2}$ are not on the same line,
 and for all sets $\{i,j,k\}\subset\ZZ$
of three distinct integers such that $i+j+k=0$, the points $p_{i},p_{j},p_{k}$
are on a line. 
We focus on which conditions that sequence becomes exactly $n$-periodic, 
so that this gives a realization of $\mathcal{T}_n$. 
 We prove that the coordinates of the points $(p_k)_{k\in\ZZ}$ 
are  parametrized by fractions of some variables $s,t$, which are subject to some relations and inequalities.
Using that $n\geq 10$, one shows that there is a unique cubic curve $E$ 
that contains the points $(p_k)_{k\in\ZZ}$. In case that cubic curve is smooth, 
it proves that the realization $(p_k)_{k\in\ZZ/n\ZZ}$ is the image by $\phi$ of a point of the modular curve, 
namely the point corresponding to the pair $(E,p_1)$. 
We then focus to the case when $E$ is singular; then $E$ is a nodal cubic curve
and the realization $(p_k)_{k\in\ZZ/n\ZZ}$ of $\mathcal{T}_n$ is a cyclic sub-group 
on the smooth part of $E$, for the natural group law with neutral element $p_0$.

\smallskip
 From the hypothesis on the sequence $(p_k)_{k\in \ZZ}$, and up
to projective transformation, one can suppose that 
\[
p_{0}=(1:0:0),\,p_{1}=(0:1:0),\,p_{2}=(0:0:1), p_3=(1:1:1).
\]
Let us suppose moreover that the sequence is $n$-periodic;  it will be convenient for us to think of the data of $p_i$ as an $n$-periodic size $\ZZ \times 3$ matrix with rows giving $p_i$, and we will now look at the consequences of the collinearity for $p_a,p_b,p_c$ for $a+b+c=0$. We will denote the three coordinates on $\PP^2$ as $(x_1:x_2:x_3)$.

\smallskip
The line $\overline{p_1p_2}=\{x_1=0\}$ contains $p_{-3}=p_{n-3}$, but since no other points are 
on this line, we may assume that $p_i = (1:y_{i,2}:y_{i,3})$ for all $i\not\in\{-3,1,2\}\md n$. Also note that since $p_{-3}\not\in \overline{p_0p_2}=\{x_2=0\}$, we have $p_{-3}=(0:1:y_{-3,3})$. 

\smallskip
The realization space of $\cT_n$ is given by periodicity and vanishing of the minors for rows $a,b,c$ with $a+b+c=0\md n$, as well as nonvanishing of the rest of the $3\times 3$ minors.
Let us denote the minor of the rows labeled $a,b,c\in\ZZ/n\ZZ$ by $\det_{a,b,c}$.

\smallskip
The point $p_{-1}$ lies on the line $\overline{p_0p_1}=\{x_3=0\}$. On the other hand, it \emph{does not} lie on $\overline{p_0p_2}=\{x_2=0\}$. Therefore,
\[
p_{-1}=(1:s:0),\, s\neq 0.
\]
To be more precise, the vanishing and nonvanishing of the appropriate determinants eliminates the entry 
$y_{-1,3}$ and forces $y_{-1,2}\neq 0$.
 Similarly, $p_{-3}$ lies on the intersection of the 
lines $\overline{p_1p_2} = \{x_1=0\}$ and $\overline{p_0p_3}=\{x_2=x_3\}$ so
\[
p_{-3}=(0:1:1).
\]
The vanishing of $\det_{-2,0,2} = -y_{-2,2}$ and $\det_{-2,-1,3} = 
s - y_{-2,2} + y_{-2,3} - s y_{-2,3}$  yields 
\[
p_{-2}=(1:0:\frac {s}{s-1})
\]
where $s-1$ is invertible since $\det_{-1,2,3} = s-1$ is nonzero. 
We then compute $\det_{-4,1,3}= 1- y_{-4,3} $ and denote $t=y_{-4,2}$, so that $$p_{-4}=(1:t:1).$$
Then $\det_{-4,0,4}=y_{4,2} - t y_{4,3}=0$ and $\det_{-3,-1,4} = -s + y_{4,2} - y_{4,3}$ allows :us to solve for 
$$
p_4= (1:\frac {st}{t-1}:\frac s{t-1})
$$
where we know that $t-1$ is invertible because it is equal to $\det_{-4,2,3}$. 
We can also compute $p_5$ by using $\det_{-3,-2,5}=\frac s{s-1} + y_{5,2} - y_{5,3}$ and $\det_{-4,-1,5}=-s + y_{5,2} + s   y_{5,3} - t   y_{5,3}$. We get 
$$
p_5=(1:\frac{(s (-1 + t))}{(s-1 ) (1 + s - t)}: \frac {s^2}{( s-1) (1 + s - t)})
$$
where $(1+s-t)$ is invertible because $\det_{-4,-3,4}=-1 - s +t$.

\smallskip
We now observe that there is a unique up to scaling cubic curve that passes through $9$ points
$p_{-3},\ldots,p_5$. (There was a dimension two space of cubics through $p_{-4},\ldots,p_4$.) We used Mathematica to compute the size $9$ minors of the matrix of the values of the $10$ cubic monomials to see that one of them equals
$$
\frac{s^8 (1 - t + s t)}{(-1 + s)^4 (1 + s - t)^2 (-1 + t)}.
$$
We know that $1-t+st$ is invertible because $\det_{-4,-3,5}=\frac {1 - t + s t}{s-1}$. This implies that the matrix of values of the $10$ cubic monomials at the above $9$ points has rank $9$. Its nullspace is one-dimensional and gives a unique cubic through the $9$ points. Explicitly, this cubic is
\[
\begin{array}{c}
E=\{F_{s,t}=0\}=\{-s^2 x_1^2 x_2 + s x_1 x_2^2 + s t x_1^2 x_3 +(s^2- s-t) x_1 x_2 x_3\\[.3em]
  + (1-s)x_2^2 x_3 + t(1-s) x_1 x_3^2  
  +(s-1)x_2 x_3^2 =0\}.
\end{array}
\]

\smallskip
If the cubic $E$ is non-singular (which is generically the case), the $n$ points $p_k=kp_1, k\in \ZZ/n\ZZ$ on $E$ form a cyclic torsion sub-group (see also Remark \ref{dim2} below).

\smallskip
Let us study when $E$ is  singular and which singularities can occur.
 First note that the partial derivatives of $F_{s,t}$ at $p_0$ are $0, -s^2, s t$ respectively, 
 which means that  $E$ is smooth at $p_0$. We can also see that the line $s x_2 = t x_3$
  is triple-tangent to $E$ at $p_0$ unless $s=t$, which is impossible because
 $\det_{-2,1,4}=\frac{s (s - t)}{(-1 + s) (-1 + t)}$.
Moreover, the partial derivative $\frac{\partial F_{s,t}}{\partial x_1}$  is 

$$
-(2 s x_1 - x_2 + (1-s) x_3 ) (s x_2 - t x_3)
$$
so singular points can occur only at $2 s x_1 - x_2 +(1-s) x_3 =0$. The restrictions of $\frac{\partial F_{s,t}}{\partial x_2}$ and $\frac{\partial F_{s,t}}{\partial x_3}$ to this line are respectively
\[
\begin{array}{c}
3 s^2 x_1^2 + (5 s - 5 s^2 - t) x_1 x_3 +(-1 + s) (-1 + 2 s) x_3^2,
\\[.5em]
-s (-2 s + 2 s^2 + t)x_1^2 + (-1 + s) (s + 3 s^2 - t) x_1 x_3 -(-1 + s)^2 (1 + s)x_3^2.
\end{array}
\]
We can take a resultant and find that the cubic $E$ is singular if and only if (using the known nonzero factors) 
\begin{align}\label{nodes}
 -9 s + 3 s^2 + 5 s^3 + s^4 + t + 10 s t - 11 s^2 t - t^2=0.
\end{align}
The singular point is given by
\[
\begin{array}{c}
(1: \frac {3 s + 4 s^2 + s^3 + t - 8 s t}{ 4 - s + s^2 -  3 t}:
\frac{ 5 s - 6 s^2 + s^3 - t + 2 s t}{(-1 + s) (4 - s + 
     s^2 - 3 t)}).
\end{array}
\]
In particular, $E$ will have at most one singular point, a node or a cusp. 

\begin{remark}\label{dim2}
Whether or not $E$ is singular, it is clear that the points $p_k$ must form an order $n$ subgroup of the group of smooth points of $E$. Their coordinates can be computed inductively as rational functions in $s$ and $t$.
(Indeed, from the property that if $i+j+k=0$ then the points $p_i,p_j,p_k$ are aligned, one may find the point $p_{k+1}$ (or $p_{-(k+1)}$) as the intersection point of two lines passing through points in the set $\{p_{-k},\dots,p_k\}$.)
Then $\cR(\cT_n)$ will be a closed subscheme of a Zariski open set in $\CC^2$.  The Zariski open set is given by the nonvanishing of the determinants, and the closed subscheme will be cut out by the $n$-periodicity conditions $p_k=p_{k+n}$.
\end{remark}

The curve \eqref{nodes} is rational, and can be parameterized by
$$
(s,t) = \big(\frac {r^2-1}{5r^2-1}, \frac {8 (r - 3 r^2 + 4 r^4)}{(5 r^2-1)^2}\big).
$$
In fact, we can also parameterize its double cover that picks the branches at the node
as 
$$
(s,t)= (\frac{(-1 + w^2) (3 + w^2)}{1 + 10 w^2 + 5 w^4 }, \frac {32 w^4 (1 + w^2) (3 + w^2)}{(1 + 10 w^2 + 5 w^4)^2}).
$$
Under this parameterization, the singularity occurs at
$$(1: \frac{4 w^2 (3 + w^2)}{1 + 10 w^2 + 5 w^4}: \frac{3 + w^2}{2 (1 + w^2)})
$$
and one may that way obtain the equation of the curve $F_{s,t}$  in terms of $w$.
We parameterize the smooth locus of the cubic curve by (generically)
$$
\begin{array}{c}
(x_1:x_2:x_3)=(1:\frac{
  4 (-1 + v) (-1 + w) w^2 (1 + w) (3 + w^2) (-1 + v - 2 w - 
     2 v w - w^2 + v w^2)}{(1 + v - w + v w) (-1 - v - 3 w + 3 v w - 
     3 w^2 - 3 v w^2 - w^3 + v w^3) (1 + 10 w^2 + 5 w^4)} \\
  :   \frac{
 (-1 + v) (-1 + w)^2 (1 + w)^2 (-1 - v - w + v w) (3 + w^2)}{
  2 (1 + w^2) (-1 + v + 2 w + 2 v w - w^2 + v w^2) (-1 - v - 3 w + 
     3 v w - 3 w^2 - 3 v w^2 - w^3 + v w^3)})
\end{array}
$$
so that the tangent lines to branches at singular points correspond to $v=0,\infty$ and  
$v=1$ gives $p_0$. Then $v=\frac{w-1}{w+1}$ gives $p_1$. The group law on the nodal cubic then implies that $p_n$ corresponds to $v=\big(\frac{w-1}{w+1}\big)^n$, so the $n$-periodicity and lack of smaller periods implies that the solutions 
correspond to 
$$
\frac{w-1}{w+1} = \zeta^a
$$
where $\gcd(a,n)=1$ and $\zeta = \ee^{2\pi\ii/n}$. We can compute 
 the points $p_k$ for $k\neq -3,1,2\md n$  to find
\begin{equation}\label{pkcusp}
\begin{array}{c}
p_k= (1:
\tfrac{ (1 - \zeta^{2 a}) (1 + \zeta^{3 a}) (1 - \zeta^{a k}) (1 - \zeta^{
     a (k + 2)})}{(1 - \zeta^{5 a}) (1 - \zeta^{a (k - 1)}) (1 - \zeta^{a (k+3)})}:
\tfrac {(1 - \zeta^a + \zeta^{2 a}) (1 - \zeta^{a k}) (1 - \zeta^{a (k + 1)})}{
(1 + \zeta^{2 a}) (1 - \zeta^{a (k - 2)}) (1 - \zeta^{
   a (k + 3)})})
     \end{array}
\end{equation}
with $p_1=(0:1:0),p_2=(0:0:1),p_{-3}=(0:1:1)$.

\begin{remark}
We will later see the above realizations of $\cT_n$ as images of certain cusps of the modular curve $X_1(n)$.
\end{remark}

Summarizing, we obtained that:

\begin{proposition}\label{set-th}
For $n\geq 10$,  the realization space  $\cR(\cT_n)$ is set-theoretically equal to the union of the image $\phi_{n}(X_{1}(n)^{\circ})$ of the  complement of $X_1(n)^\circ$ of the cusps in the modular curve $X_1(n)$ and the points in Equation \eqref{pkcusp}.
\end{proposition}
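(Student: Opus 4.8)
The plan is to organize the explicit computations preceding the statement into an argument establishing the two inclusions. The inclusion $\phi_n(X_1(n)^\circ)\subseteq\cR(\cT_n)$ is immediate, since $\phi_n$ is the morphism constructed after Proposition~\ref{prop:Real-Tn}. The points of Equation~\eqref{pkcusp} also define points of $\cR(\cT_n)$: for $\gcd(a,n)=1$ and $n\geq10$ all the displayed denominators are nonzero, the $n$ listed points are pairwise distinct, and three of them are collinear exactly when the indices sum to $0$ modulo $n$ -- identities in $\zeta^a$ that one checks directly, reflecting the group law (with flex $p_0$) of the nodal cubic whose smooth locus contains the subgroup $\mu_n$. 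The substance is therefore the reverse inclusion: every class in $\cR(\cT_n)$ equals $\phi_n(E,p_1)$ for some elliptic curve $E$ with an $n$-torsion point $p_1$, or is one of the points of \eqref{pkcusp}.

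First I would fix the section of the quotient map putting $p_0,p_1,p_2,p_3$ in the canonical frame \eqref{canbasis}, so that a point of $\cR(\cT_n)$ is a point arrangement $(p_k)_{k\in\ZZ/n\ZZ}$ in that normal form. As shown above, vanishing and nonvanishing of the relevant $3\times3$ minors $\det_{a,b,c}$ express $p_{-1},p_{-2},p_{-3},p_{-4},p_4,p_5$ as explicit rational functions of two parameters $s,t$, so that $\cR(\cT_n)$ is a closed subscheme of the Zariski open set $U\subseteq\CC^2$ on which all the relevant $\det_{a,b,c}$ are invertible, cut out by the $n$-periodicity relations (Remark~\ref{dim2}). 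Using $n\geq10$, the nine indices $-3,\dots,5$ are pairwise distinct in $\ZZ/n\ZZ$, and the nonvanishing on $U$ of the computed $9\times9$ monomial minor $\tfrac{s^{8}(1-t+st)}{(s-1)^{4}(1+s-t)^{2}(t-1)}$, whose factors are all invertible on $U$, shows that there is a unique cubic $E=\{F_{s,t}=0\}$ through $p_{-3},\dots,p_5$. One also checks $E$ is reduced and irreducible, since otherwise either $p_0$ could not be a smooth point of $E$ whose tangent line is triple-tangent, or more than three of the $p_k$ would lie on a single line.

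The key step is to show that $p_k\in E$ for \emph{all} $k$ and to identify the arrangement with a cyclic subgroup. The line $\{sx_2=tx_3\}$ is triple-tangent to $E$ only at $p_0$, and $s\neq t$ because $\det_{-2,1,4}\neq0$; so $p_0$ is a smooth flex of $E$, which we take as the origin of the group law on the smooth locus $E^{\mathrm{sm}}$. The singular point of $E$, when present, is the explicit point with first coordinate $1$, so it is not $p_1=(0:1:0)$ and not among $p_{-3},\dots,p_5$, because a line through the node meets $E$ in the node with multiplicity two and one further point, hence cannot carry three distinct $p_k$; thus $p_{-3},\dots,p_5\in E^{\mathrm{sm}}$. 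A short induction on $|k|$, using the collinearities of $\cT_n$ (all available because $n\geq10$) and the fact that a collinear triple on $E$ sums to $0$ in $E^{\mathrm{sm}}$, then gives $p_k=k\,p_1$ for $|k|\leq\lfloor n/2\rfloor$: the inductive step realizes $p_{k+1}$ as the intersection of two chords of $E$ through points already identified as multiples of $p_1$, which are distinct lines (four points of $E$ are never collinear) and meet $E$ again at the common third point $(k+1)p_1$. One further $\cT_n$-collinearity then forces $n\,p_1=0$, so that, the $p_k$ being distinct, $\{p_k\}$ is exactly the order-$n$ cyclic subgroup of $E^{\mathrm{sm}}$ generated by $p_1$; in particular every $p_k$ lies on $E^{\mathrm{sm}}$.

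Finally the case split. If $E$ is smooth, the arrangement is $(k\,p_1)_{k\in\ZZ/n\ZZ}$ with $p_1$ of exact order $n$, hence its class is $\phi_n(E,p_1)\in\phi_n(X_1(n)^\circ)$. If $E$ is singular it cannot be cuspidal, since in characteristic zero the smooth locus of a cuspidal cubic is $\GG_a$, which is torsion-free; so $E$ is nodal (cf.~\eqref{nodes}), $E^{\mathrm{sm}}\cong\GG_m$, and $\{p_k\}=\mu_n$. Using the explicit rational parametrization $v\mapsto(x_1:x_2:x_3)$ of $E^{\mathrm{sm}}$ above -- with $v=1$ at $p_0$, $v=\tfrac{w-1}{w+1}$ at $p_1$, $v=0,\infty$ at the two node branches, and $(s,t)$ expressed through $w$ -- the group law gives $p_k\leftrightarrow v=\big(\tfrac{w-1}{w+1}\big)^{k}$, so $n$-periodicity together with the absence of a smaller period forces $\tfrac{w-1}{w+1}=\zeta^{a}$ with $\gcd(a,n)=1$, and substitution yields precisely the points of \eqref{pkcusp}. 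I expect the main obstacle to be this propagation step -- carrying ``lies on $E$ and equals $k\,p_1$'' from the initial nine points to all indices -- where the base of the induction must use exactly the right $\cT_n$-collinearities and where the whole rigidity rests on the computer-verified nonvanishing of the $9\times9$ minor (this is where $n\geq10$ is genuinely used, beyond distinctness of nine points); once that is in place, the singular case reduces to the bookkeeping already carried out above.
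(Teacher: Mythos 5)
Your proposal is correct and follows essentially the same route as the paper: the paper's own proof is a two-line summary deferring to the preceding computation (unique cubic through $p_{-3},\dots,p_5$ via the nonvanishing $9\times 9$ minor, propagation of $p_k=k\,p_1$ by the chord construction of Remark~\ref{dim2}, and the smooth/nodal dichotomy leading to \eqref{pkcusp}), which is exactly the argument you have organized into the two inclusions. Your added details — excluding the cuspidal case because $\GG_a$ is torsion-free in characteristic zero, and the two-distinct-chords argument forcing $p_{k+1}=(k+1)p_1$ — are correct elaborations of steps the paper leaves implicit.
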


\begin{proof}
If $E$ is nonsingular, then we see that the configuration $\{p_k\}$ is in the image of  $X_1(n)^\circ$, and the singular case has been computed above.
\end{proof}

We will later prove that $\cR(\cT_n)$ is a smooth curve, but for now we will only argue that the tangent space to $\cR(\cT_n)$ at any configuration ${\bf p} = \{p_k\}$ is of dimension at most one.
\begin{proposition}\label{atmost1}
Let ${\bf p} = \{p_k\}$ be a specific realization of $\cT_n$ over $\CC$. Then the tangent space to 
 $\cR(\cT_n)$ at $\bf p$ is at most one-dimensional.
\end{proposition}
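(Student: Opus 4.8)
Fix the section of the quotient map $q$ that normalises $p_0,p_1,p_2,p_3$ to the standard frame \eqref{canbasis}. By Remark \ref{dim2}, $\cR(\cT_n)$ then becomes a closed subscheme of a Zariski-open set $U\subseteq\CC^2$ with coordinates $(s,t)=(y_{-1,2},y_{-4,2})$, cut out by the finitely many collinearity relations among the explicit rational functions $p_k(s,t)$; thus $\dim T_{\bf p}\cR(\cT_n)\le 1$ amounts to the assertion that the Jacobian of these relations has rank $\ge 1$ at ${\bf p}=(s_0,t_0)$. The plan is to produce a \emph{local} isomorphism, near $\bf p$, between $\cR(\cT_n)$ and an open part of the modular curve $X_1(n)$, and then to invoke the smoothness of $X_1(n)$.

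The candidate local isomorphism is the forgetful map $f\colon(s,t)\mapsto(E_{s,t},p_1)$, attaching to a configuration the plane cubic $E_{s,t}$ carrying it (Remark \ref{dim2}) together with $p_1$, viewed as a point of the group of smooth points of $E_{s,t}$ with identity the flex $p_0$. Since $p_1$, $2p_1=-p_{-2}$, $3p_1=p_3$ are all distinct from $p_0$ near $\bf p$ (seen above from the explicit formulas, e.g.\ $p_{-2}=(1:0:\tfrac{s}{s-1})$ with $\tfrac{s}{s-1}\neq 0$), one may put $(E_{s,t},p_1)$ into Tate normal form $y^2+(1-c)xy-by=x^3-bx^2$ with $p_1=(0,0)$, which realises $f$ as an explicit morphism $U\to\CC^2_{(b,c)}$. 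This morphism admits an explicit inverse near $f({\bf p})$: from $(b,c)$, form on the Tate cubic the points $P,2P,3P$ with $P=(0,0)$ by the chord-tangent law, and apply the unique projective transformation sending $O,P,2P,3P$ to \eqref{canbasis}; for $(b,c)$ near $f({\bf p})$ these four points are in general position, since they are so at $f({\bf p})$. That $f$ and this map are mutually inverse on neighbourhoods of $\bf p$ and $f({\bf p})$ rests on the rank-$9$ computation above: for $n\ge 10$ the ten cubic monomials evaluated at $p_{-3},\dots,p_5$ have rank exactly $9$, an open condition on $U$, so the cubic through a configuration is unique and, by the recursion of Remark \ref{dim2}, $p_k=kp_1$ on $E_{s,t}$ for every $k$. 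Under this local isomorphism $\cR(\cT_n)$ corresponds near $\bf p$ to the locus $Z=\{\,p_1\text{ has order exactly }n\,\}$: the forward inclusion is that an order-$n$ subgroup of the smooth locus of a plane cubic realises exactly $\cT_n$, and the reverse is that if $p_1$ had order a proper divisor of $n$, two of the $p_k$ would coincide and the configuration would fall into $W(\cT_n)$. Finally $Z$ is an open subscheme of the modular curve $X_1(n)$ in its Tate model.

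Granting this, $T_{\bf p}\cR(\cT_n)\cong T_{f({\bf p})}X_1(n)$, which is one-dimensional because $X_1(n)$ is a smooth curve: at non-cuspidal points this is elementary (over a base of characteristic prime to $n$, multiplication by $n$ is étale, hence the universal $n$-torsion locus is smooth over the $j$-line), and at the cusps it is the theorem of Deligne-Rapoport. In particular $\dim T_{\bf p}\cR(\cT_n)\le 1$.

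The step I expect to cost the most effort is the behaviour at the finitely many realizations for which $E$ is singular, i.e.\ (by Proposition \ref{set-th}) one of the configurations \eqref{pkcusp}. There $E$ is necessarily a \emph{nodal} cubic, since the smooth locus $\mathbb{G}_a$ of a cuspidal cubic carries no $n$-torsion in characteristic $0$; its smooth locus is then $\mathbb{G}_m$ and $p_1$ corresponds to a primitive $n$-th root of unity, and one has to verify that $p_1=(0,0)$ is still a smooth point of the Tate cubic (i.e.\ the relevant coefficient is nonzero) and that the cubic attached to a first-order deformation of $\bf p$ is a genuine generalised elliptic curve over $\CC[\varepsilon]$ — immediate, since it still belongs to the family $\{E_{s,t}\}$ for which $p_0$ is a point of triple tangency — and, finally, that $X_1(n)$ is smooth at the corresponding cusp, replacing the Tate chart by another coordinate chart of $X_1(n)$ there if necessary. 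Alternatively, for these finitely many points one can bypass the general theory entirely: such $\bf p$ lies on the singular locus $N$ defined by \eqref{nodes}, the scheme $\cR(\cT_n)\cap N$ is zero-dimensional, and on the $w$-parametrised double cover of $N$ it is defined by $\bigl(\tfrac{w-1}{w+1}\bigr)^{n}=1$, which has only simple zeros; hence $\cR(\cT_n)\cap N$ is reduced at $\bf p$, so $T_{\bf p}\cR(\cT_n)\cap T_{\bf p}N=T_{\bf p}(\cR(\cT_n)\cap N)=0$ inside $\CC^2$, and since $\dim T_{\bf p}N\ge 1$ this already forces $\dim T_{\bf p}\cR(\cT_n)\le 1$.
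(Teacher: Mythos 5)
Your proposal is correct in substance but takes a far heavier route than the paper, and almost all of the extra machinery is where its risk lives. The paper's proof is a three-line idea: inside the ambient $\CC^2$ with coordinates $(s,t)$, exhibit a \emph{single} first-order deformation of $\bf p$ that satisfies every collinearity but violates $n$-periodicity --- namely keep the cubic $E_{\bf p}$ fixed, replace $p_1$ by a nonzero $\CC[\varepsilon]/(\varepsilon^2)$-deformation $p_1(\varepsilon)$ in the smooth locus of $E_{\bf p}$, and set $p_k=kp_1(\varepsilon)$. After renormalizing $p_0,\dots,p_3$ this gives a nonzero tangent vector of the ambient $(s,t)$-space not lying in $T_{\bf p}\cR(\cT_n)$, since periodicity fails to first order (multiplication by $n$ is injective on tangent spaces in characteristic $0$); hence $\dim T_{\bf p}\cR(\cT_n)\le 1$. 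This works verbatim for nodal $E_{\bf p}$, because only the group of $\CC[\varepsilon]/(\varepsilon^2)$-points of the smooth locus is used. Note that the kernel of your argument --- $[n]$ is \'etale, so sliding the marked point along the fixed cubic destroys $nP=O$ to first order --- \emph{is} exactly this one tangent vector; everything else you build (Tate normal form, the two-sided inverse, the identification of the exact-order-$n$ locus $Z$ with an open part of $X_1(n)$, the cusp discussion via Deligne--Rapoport) is not needed for the stated bound and in effect anticipates the main theorem of Section \ref{RelModFor}, which the paper deduces \emph{from} this proposition rather than the other way around.

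If you do pursue your route, the soft spots (none fatal, but each needing work) are: (i) mutual inverseness of $f$ and its candidate inverse requires $p_k=kp_1$ in the group law on $E_{s,t}$ with identity $p_0$, which needs $p_0$ to be a flex (the triple tangency of $sx_2=tx_3$ established in Section \ref{sec:realiz}) and an induction over enough collinear triples; (ii) you must match scheme structures, i.e.\ check that the ideal generated by \emph{all} periodicity relations $p_{k+n}=p_k$ corresponds to the natural ideal of $\{nP=O\}$, not merely the underlying sets; (iii) the phrase ``$Z$ is an open subscheme of $X_1(n)$ in its Tate model'' is dangerous, since the standard plane models of $X_1(n)$ in the $(b,c)$-plane are in general singular curves --- smoothness has to be argued directly on the exact-order locus, which your \'etale-ness remark does at non-cuspidal points; (iv) at the nodal configurations your fallback via the locus $N$ of \eqref{nodes} is the cleanest path, but the ``simple zeros'' claim concerns the actual coordinate equations $y_{k+n,j}-y_{k,j}$ restricted to the double cover, not the set-theoretic condition $\bigl(\tfrac{w-1}{w+1}\bigr)^n=1$; to see that the orders of vanishing agree you need that the parametrization of the smooth locus of the nodal cubic is an immersion at the points $p_k$.
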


\begin{proof}
We saw in Remark \ref{dim2} that $\cR(\cT_n)$ is locally given by vanishing and non-vanishing of some polynomials in $(s,t)$. 
As above, we consider the cubic curve $E_{\bf p}$ passing through the points in ${\bf p}$. There is a group structure on the space of $\CC[\varepsilon]/(\varepsilon^2)$-points in the smooth part of $E_{\bf p}$.
Pick a nonzero deformation $p_1(\varepsilon)$ of $p_1 = (0:1:0)$ and consider the 
$\CC[\varepsilon]/(\varepsilon^2)$ matrix of size $\ZZ\times 3$ where $p_k$ is given by 
$$
p_k = k p_1(\varepsilon) \md \varepsilon^2.
$$
Note that this matrix will no longer be periodic, but it will satisfy the vanishing of $\det_{a_1,a_2,a_3}$ with $a_1+a_2+a_3=0$ modulo $\varepsilon^2$.
We can multiply it by an invertible matrix with values in  $\CC[\varepsilon]/(\varepsilon^2)$ to get
the points $p_0,\ldots,p_3$ into the standard form. This provides a tangent vector to $\bf p$ in the $\CC^2$ with coordinates $(s,t)$ that is not tangent to $\cR(\cT_n)$, and the claim follows.
\end{proof}

\begin{remark}
If we knew that the realization space is connected, the above Proposition \ref{atmost1} would imply that it is a smooth curve. We will prove it in Section \ref{RelModFor}.
\end{remark}

\begin{remark}
It is sensible to ask about the compactification of the realization space of $\cT_n$ in $(\PP^2)^{n-4}$. We will do it in the latter sections.
\end{remark}

A priori, it is not obvious that the scheme-theoretic analog of the above 
Proposition \ref{set-th} holds, although Proposition \ref{atmost1} comes pretty close. We will  prove it in Section \ref{RelModFor}.


\section{Log-derivatives of the theta function}\label{sec:stu}
In this section we recall some known results about log-derivatives of the theta functions that almost generate the ring of modular forms for the congruence subgroup $\Gamma_1(n)$.

\smallskip
We start with the Jacobi theta function $\theta:\CC\times \HH\to \CC$ defined by 
\begin{equation}\label{prodtheta}
\theta(z,\tau) = \ee^{\frac {\pi \ii\tau}4}(2\sin\pi z) \prod_{l=1}^\infty(1-\ee^{2\pi\ii l \tau})
\prod_{l=1}^\infty(1-\ee^{2\pi\ii (l \tau+z)})\prod_{l=1}^\infty(1-\ee^{2\pi\ii (l \tau-z)}).
\end{equation}
The convergence of the product is clear, as are the transformation properties
$$
\theta(-z,\tau) = -\theta(z,\tau),~\theta(z+1,\tau) =  \theta(z,\tau),~\theta(z+\tau,\tau) = -\ee^{-2\pi\ii z-\pi \ii \tau}\theta(z,\tau).
$$
It is also easy to show that for fixed $\tau$ the function $\theta(z,\tau)$ has a simple zero precisely at $z\in L = \ZZ + \ZZ\tau$.
It is notably harder to prove, but $\theta$ also satisfies a nice functional equation under the Jacobi transformations
$$
(z,\tau) \to \Big(\frac z{cz+d},\frac {a\tau+b}{c\tau+d}\Big)
$$
for $\left(\begin{array}{cc}a&b\\c&d\end{array}\right)\in {\mathrm {SL}}(2,\ZZ)$, see \cite{Chandrasekharan}.      

\smallskip
Let $n\geq 3$ be a positive integer. For integers $a$ not divisible by $n$ we introduce the functions 
$$
r_{a}(z,\tau) = 
\frac{\theta(\frac an+z,\tau)\theta_z(0,\tau)}{\theta(z,\tau)\theta(\frac an,\tau)},
$$
where $\theta_z$ is the partial derivative with respect to $z$.
We will routinely suppress their dependence on the variable $\tau$ in our notation. Properties of $\theta$ imply that 
functions $r_a(z)$  have a simple zero at $z=-\frac an \md L$, a pole of order one at $L$ and the transformation properties
$$
r_a(z+1)=r_a(z),~r_a(z+\tau) =\zeta_n^{-a} r_a(z)
$$
where $\zeta_n$ denotes $\ee^{\frac {2\pi\ii}n}$. 
The $z$-independent factors of $r_a(z)$ are picked in a way to assure that the residue of $r_a(z)$ at $z=0$ is equal to $1$, so
we have the Laurent expansion
\begin{align}\label{Laurent}
\frac 1z + s_a + t_a z + u_a z^2 + \ldots
\end{align}
where $s_a,t_a,u_a$ are functions of $\tau$. In particular, the product expansion of $\theta$ implies that 
$$
s_a(\tau) = \frac {\theta_z(\frac an,\tau)}{\theta(\frac an,\tau)}=
2\pi\ii\Big(\frac {\zeta_n^a+1}{2(\zeta_n^a-1)} -\sum_{d>0}\ee^{2\pi \ii d \tau}\sum_{k\vert d}\big(\zeta_n^{ka}-\zeta_n^{-ka}\big)\Big).
$$
The functions $s_a=s_a(\tau)$ are modular forms of weight $1$ for the congruence subgroup $\Gamma_1(n)\subset  {\mathrm SL}(2,\ZZ)$. They depend on $a\md n$ only and satisfy 
$s_{-a}=-s_a$. 
Similarly,
$t_a$ and $u_a$ are modular forms of weight $2$ and $3$ respectively, and they satisfy $t_{-a}=t_a$ and $u_{-a}=-u_a$.

\smallskip
It has been shown in \cite{BG,BG2} 
that $s_a$ \emph{almost} generate the ring of modular forms for $\Gamma_1(n)$. Namely, any modular form of weight $3$ or higher can be expressed as a polynomial in $s_a$. The situation is notably more complicated for 
weight $2$ where the cuspidal part of the span of $s_as_b$ is precisely the span of Hecke eigenforms of analytic rank zero, see \cite{vanish}. There is also a similar statement involving the Wronskians of $s_a$ and $s_b$, see \cite{wronskian}.

\smallskip
In addition to the symmetry relations, the forms $s_a$ and $t_a$ satisfy the following.
\begin{proposition}\label{st}
If $a,b,c$ are nonzero elements of $\ZZ/n\ZZ$ such that $a+b+c=0\md n$, then 
$$
s_a s_b + s_b s_c + s_c s_a +t_a+t_b+t_c = 0.
$$
\end{proposition}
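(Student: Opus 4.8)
The plan is to exploit the fact that $r_a(z)r_b(z)$ and $r_c(z)$ are elliptic-type functions with prescribed poles, and to read off the identity from a local expansion at $z=0$. Recall $r_a(z)$ has a simple pole at $z\in L$ with residue $1$, a simple zero at $z=-\frac an \md L$, and quasi-periodicities $r_a(z+1)=r_a(z)$, $r_a(z+\tau)=\zeta_n^{-a}r_a(z)$. Consider the function
\[
G(z) = r_a(z)\,r_b(z) - r_{-c}(z)\big(r_a(z)+r_b(z)\big).
\]
Since $a+b+c\equiv 0$, we have $\zeta_n^{-a}\zeta_n^{-b}=\zeta_n^{c}=\zeta_n^{-(-c)}$, so each of the three terms $r_a r_b$, $r_{-c}r_a$, $r_{-c}r_b$ transforms under $z\mapsto z+\tau$ by the \emph{same} factor $\zeta_n^{c}$; hence $G(z+1)=G(z)$ and $G(z+\tau)=\zeta_n^{c}G(z)$. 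Thus $G$ is a section of a degree-one line bundle on the elliptic curve $\CC/L$ (a "theta function with multiplier"), so it has exactly one zero modulo $L$ unless it is identically zero.

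Next I would analyze the poles of $G$ at $z=0$. Both $r_a r_b$ and $r_{-c}(r_a+r_b)$ have a double pole at $z=0$. Using the Laurent expansion \eqref{Laurent}, $r_a(z)=\frac1z+s_a+t_az+\cdots$, one computes
\[
r_a r_b = \frac1{z^2} + \frac{s_a+s_b}{z} + (s_as_b+t_a+t_b) + O(z),
\]
and similarly $r_{-c}(r_a+r_b) = \frac{2}{z^2} + \frac{2s_{-c}+s_a+s_b}{z} + \cdots$; since $s_{-c}=-s_c$ and $a+b+c\equiv0$ forces (from $r_a+r_b+r_{-c}$ considerations, or directly) the relevant cancellations, the double pole of $G$ cancels and the residue of $G$ at $0$ vanishes — indeed one checks $r_a+r_b+r_c$ is regular at $z=0$ with value $s_a+s_b+s_c$, which is the one computation driving everything. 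After these cancellations $G$ is regular at $z=0$ (hence everywhere, since $0$ is the only possible pole of $r_a,r_b,r_{-c}$ modulo $L$), so $G$ is a holomorphic section of the same degree-one bundle, thus has exactly one zero mod $L$ or is identically zero. To pin down the constant term, evaluate: the constant term of $G$ at $z=0$ is
\[
(s_as_b + t_a+t_b) - \big[\text{constant term of } r_{-c}(r_a+r_b)\big],
\]
and carrying out this finite expansion (using $s_{-c}=-s_c$, $t_{-c}=t_c$, and the vanishing $s_a+s_b+s_c=$ value of $r_a+r_b+r_c$ at $0$) yields exactly $-(s_as_b+s_bs_c+s_cs_a+t_a+t_b+t_c)$ up to sign.

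To conclude I would show $G\equiv 0$. The cleanest route: $G$ is holomorphic with multiplier system matching $r_{-c}$, i.e. $G/r_{-c}$ is elliptic (genuinely $L$-periodic) and holomorphic away from the zero $z=\frac cn$ of $r_{-c}$, where it could have at worst a simple pole; but an elliptic function with at most one simple pole is constant, so $G = \lambda\, r_{-c}$ for a constant $\lambda=\lambda(\tau)$. Comparing the Laurent expansions at $z=0$ — $G$ is regular there while $r_{-c}$ has residue $1$ — forces $\lambda=0$, hence $G\equiv0$. Reading off the constant terms of the identity $r_ar_b = r_{-c}(r_a+r_b)$ at $z=0$ then gives the proposition. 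The main obstacle is purely bookkeeping: getting the signs right in the Laurent expansions (the interplay of $r_{-c}$ versus $r_c$, and $s_{-a}=-s_a$, $t_{-a}=t_a$), and verifying that the $\frac1{z^2}$ and $\frac1z$ terms of $G$ genuinely cancel — this last cancellation is equivalent to the regularity of $r_a+r_b+r_c$ at $z=0$, which follows from each $r_x$ having residue $1$ there, so no serious difficulty is expected.
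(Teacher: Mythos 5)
Your construction of $G(z)=r_a(z)r_b(z)-r_{-c}(z)\bigl(r_a(z)+r_b(z)\bigr)$ breaks down at several points, and the argument cannot be repaired in its current form. First, the multiplier claim is false: $r_ar_b$ transforms by $\zeta_n^{-a-b}=\zeta_n^{c}$ under $z\mapsto z+\tau$, but $r_{-c}r_a$ transforms by $\zeta_n^{c-a}$ and $r_{-c}r_b$ by $\zeta_n^{c-b}$, so the three terms do not share a common multiplier and $G$ is not a theta function with a single character. Second, the pole at $z=0$ does not cancel: $r_ar_b=\tfrac1{z^2}+\tfrac{s_a+s_b}{z}+\cdots$ while $r_{-c}(r_a+r_b)=\tfrac{2}{z^2}+\tfrac{s_a+s_b-2s_c}{z}+\cdots$, so $G$ has a genuine double pole with leading coefficient $-1$ and residue $2s_c$, and every subsequent step (holomorphy, $G\equiv 0$, reading off constant terms) fails. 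Relatedly, $r_a+r_b+r_c$ is not regular at $z=0$; it has a simple pole there with residue $3$. Finally, even if the identity $r_ar_b=r_{-c}(r_a+r_b)$ you are aiming for were true, comparing constant terms would give $s_as_b+s_bs_c+s_cs_a=2t_c$, which is not symmetric in $a,b,c$ and is not the claimed relation --- a sign that the decomposition itself is the wrong one. (The correct two-term identity in this spirit is $r_ar_b=-r'_{-c}+(s_a+s_b)r_{-c}$, Proposition \ref{rr}: the space of quasi-periodic functions with the character of $r_{-c}$ and a pole of order at most $2$ at $0$ is two-dimensional, spanned by $r_{-c}$ and $r'_{-c}$, not by $r_{-c}r_a$ and $r_{-c}r_b$, which live in different spaces.)

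The intended proof is much shorter and uses your underlying instinct (a global constraint forcing a local Laurent coefficient to vanish), but applied to the right function: the triple product $r_a(z)r_b(z)r_c(z)$ is genuinely elliptic for $L$, because its multiplier under $z\mapsto z+\tau$ is $\zeta_n^{-(a+b+c)}=1$, and its only poles lie at $z\in L$. The sum of the residues of an elliptic function over a fundamental domain vanishes, so its residue at $z=0$ must be zero; multiplying out the three expansions $\tfrac1z+s_\bullet+t_\bullet z+\cdots$ shows that this residue is exactly $s_as_b+s_bs_c+s_cs_a+t_a+t_b+t_c$.
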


\begin{proof}
Since $\zeta_n^a\zeta_n^b\zeta_n^c=1$, the product
$r_a(z)r_b(z)r_c(z)$ is elliptic with respect to $L$. Its only poles are at $z\in L$, thus the residue at $z=0$ must be zero. It remains to use the
expansion \eqref{Laurent}.
\end{proof}

\begin{remark}
For large enough $n$, these are not the only degree two relations on $s_a,t_a$ due to presence of Hecke eigenforms of positive analytic rank. However, for $n$ prime the above relations cut out $X_1(n)$ scheme-theoretically \cite{BGP}.
\end{remark}

Recall that the Weierstrass $\pp$ function defined as 
$$
\pp(z) = \frac 1{z^2} + \sum_{l\in L,l\neq 0}\Big(\frac 1{(z-l)^2}-\frac 1{l^2})
$$
and its derivative $\pp'(z)$ can be used to embed $E = \CC/L$ into $\CC\PP^2$ by
$$
z\mapsto\left\{ \begin{array}{ll}(\pp(z):\pp'(z):1),&z\not\in L,\\
(0:1:0),&z\in L.
\end{array}
\right.
$$
In what follows we find the expression for the values of the Weierstrass function and its derivative at $\frac an$  in terms of $s,t,u$. The latter will not be used in the rest of the paper but is easy to get with the same method.

\smallskip
\begin{proposition}\label{p-st}
Let $\pp(z)$ be the Weierstrass function for the lattice $L$.
Then
$$
\pp\Big(\frac an \Big) = s_a^2 -2t_a,~~\pp'\Big(\frac an\Big ) =  -2s_a^3  + 6s_a t_a- 6 u_a.
$$
\end{proposition}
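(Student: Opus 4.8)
The plan is to express $\pp(z)$ and $\pp'(z)$ near $z=0$ in terms of the Laurent data of the function $r_a(z)$ and then read off the claimed formulas by comparing Laurent coefficients. The key observation is that $r_a(z)$ and $\pp$ are both built from $\theta$, so their ratio is controlled. Concretely, I would first recall that the function $z\mapsto \theta(z,\tau)$ vanishes simply on $L$, so $\frac{\theta_z(0,\tau)\,\theta(z,\tau)}{\cdots}$ type quotients are elliptic exactly when the translation phases cancel. The natural comparison is between $r_a(z)$ and the Weierstrass $\sigma$- or $\zeta$-functions; since only $\pp$ is defined in the excerpt, I would instead proceed by a direct residue/Laurent computation. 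Write the Laurent expansion of $\theta(z,\tau)$ at $z=0$ as $\theta(z,\tau)=\theta_z(0,\tau)\big(z + c_3 z^3 + c_5 z^5 + \cdots\big)$ (only odd powers, by the oddness $\theta(-z,\tau)=-\theta(z,\tau)$), and similarly expand $\theta(\frac an + z,\tau)$ around $z=0$ using the derivatives $\theta(\frac an),\theta_z(\frac an),\theta_{zz}(\frac an),\ldots$. Dividing, one gets the expansion of $r_a(z)$ whose coefficients $s_a, t_a, u_a$ as in \eqref{Laurent} are explicit combinations of the $\theta$-derivatives at $\frac an$ and the $c_i$.

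Next I would use the standard fact that the logarithmic derivative of $\theta$ in $z$ differs from the Weierstrass $\zeta$-function by a linear term in $z$: there is a constant $\eta=\eta(\tau)$ with
\[
\frac{\theta_z(z,\tau)}{\theta(z,\tau)} = \zeta(z) - \eta\, z,
\]
where $\zeta'(z)=-\pp(z)$ and $\zeta(z)=\frac1z - \sum_{k\geq 1}\tfrac{1}{2k+1}G_{2k+2}z^{2k+1}$ is odd. Then
\[
\frac{d}{dz}\log r_a(z) = \frac{\theta_z(\frac an+z,\tau)}{\theta(\frac an+z,\tau)} - \frac{\theta_z(z,\tau)}{\theta(z,\tau)} = \zeta\!\Big(\tfrac an + z\Big) - \zeta(z) - \eta\,\tfrac an.
\]
On the other hand, from the Laurent expansion $r_a(z) = \frac1z + s_a + t_a z + u_a z^2 + \cdots$ we get $\frac{d}{dz}\log r_a(z) = \frac{r_a'(z)}{r_a(z)}$, and expanding this quotient gives $-\frac1z + s_a + (2t_a - s_a^2)z + \cdots$ up to the relevant order. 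Also $\zeta(\frac an + z) - \zeta(z) = \zeta(\frac an) - \pp(\frac an)\,z - \tfrac12\pp'(\frac an)\,z^2 + \cdots$ by Taylor expanding $\zeta$ around $\frac an$ and using $\zeta'=-\pp$, $\zeta''=-\pp'$. Comparing the constant terms identifies $s_a$ with $\zeta(\frac an)-\eta\frac an$ (not needed), comparing the coefficient of $z$ gives $-\pp(\frac an) = 2t_a - s_a^2$, i.e. $\pp(\frac an) = s_a^2 - 2t_a$, and comparing the coefficient of $z^2$ gives $-\tfrac12\pp'(\frac an)$ equal to the $z^2$-coefficient of $\frac{r_a'}{r_a}$, which one computes to be $3u_a - 3 s_a t_a + s_a^3$; hence $\pp'(\frac an) = -2s_a^3 + 6 s_a t_a - 6 u_a$.

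The only slightly delicate point is justifying the identity $\theta_z/\theta = \zeta - \eta z$: this follows because both sides are meromorphic with the same simple poles (residue $1$) at $L$ and the difference is entire and doubly quasi-periodic with constant periods, hence linear, and oddness kills the constant term; this is classical (see \cite{Chandrasekharan}) and I would just cite it. The main obstacle is purely bookkeeping: carrying the Laurent expansion of $r_a'(z)/r_a(z)$ correctly to order $z^2$ and matching it against the Taylor expansion of $\zeta(\frac an+z)-\zeta(z)$ to order $z^2$, i.e. making sure the $z^2$-coefficient $3u_a - 3s_at_a + s_a^3$ is computed without sign or combinatorial errors. Everything else is formal. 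I would present the computation as a short comparison-of-coefficients argument, noting that the same method yields higher $\pp$-derivatives in terms of the higher Laurent coefficients of $r_a$, as remarked before the statement.
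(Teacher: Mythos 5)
Your proposal is correct, and the coefficient bookkeeping checks out: the $z$- and $z^2$-coefficients of $r_a'(z)/r_a(z)$ are indeed $2t_a-s_a^2$ and $3u_a-3s_at_a+s_a^3$, and matching them against $-\pp(\frac an)$ and $-\frac12\pp'(\frac an)$ gives exactly the stated formulas. However, your route is genuinely different from the paper's. The paper never introduces the Weierstrass $\zeta$-function or the quasi-period $\eta$: it instead forms the products $r_a(z)r_{-a}(z)$ and $r_a'(z)r_{-a}(z)$, which are honestly elliptic with poles only at $z=0$ of orders $2$ and $3$, identifies them by their principal parts as $\pp(z)-s_a^2+2t_a$ and $\frac12\pp'(z)+s_a\pp(z)+(3u_a-s_at_a)$ respectively, and then evaluates at $z=\frac an$, where the products vanish because $r_{-a}$ has its zero there. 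That argument is more self-contained (it only uses that an elliptic function is determined by its principal parts up to a constant, plus the location of the zero of $r_{-a}$), at the cost of needing a separate product identity for each of $\pp$ and $\pp'$. Your approach requires the classical identity $\theta_z/\theta=\zeta(z)-\eta z$ as an extra input (which you justify correctly via quasi-periodicity and oddness), but in exchange it packages both evaluations into a single Taylor expansion of $\zeta(\frac an+z)-\zeta(z)$ and extends more transparently to higher derivatives of $\pp$ at $\frac an$. Either proof is acceptable.
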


\begin{proof}
Consider 
$$
r_a(z) r_{-a}(z) =  \Big(\frac 1z + s_a  + t_a z + \ldots\Big) \Big(\frac 1z - s_a  + t_a z + \ldots\Big)
= \frac 1{z^2} - s_a^2  + 2 t_a + \ldots
$$
It is an elliptic function with only pole of order $2$ at $z=0$. Moreover, since its Laurent power series has coefficient $1$ at $z^{-2}$, it is equal to 
$$
\pp(z)  - s_a^2  + 2 t_a
$$
because the Laurent expansion of $\pp(z)$ has coefficients $1$ and $0$ at $z^{-2}$ and $z^0$ respectively.
Since  $r_a(\frac an)=0$, we get $\pp(\frac an) = s_a^2 -2t_a$.

\smallskip
We can find the value of the derivative $\pp'(\frac an)$ by a similar calculation. We consider 
\begin{align*}
r'_a(z) r_{-a}(z) =  \Big(-\frac 1{z^2} + t_a + 2u_a z + \ldots\Big)\Big(\frac 1z - s_a  + t_a z - u_a z^2 + \ldots\Big)
\\
=-\frac 1{z^3}+ s_a\frac 1{z^2}  + (3 u_a-s_at_a)+ \ldots
\end{align*}
which is an elliptic function with pole of order $3$ at $z=0$ and is therefore a linear combination of $\pp'(z)$, $\pp(z)$ and $1$. Since $\pp'$ and $\pp$ do not have any constant terms in their Laurent expansions at $z=0$, we get 
\begin{align*}
r'_a(z) r_{-a}(z) = \frac 12 \pp'(z) + s_a \pp(z) + ( 3 u_a-s_a t_a ).
\end{align*}
As before, we get 
$$
 \frac 12 \pp'(\frac an ) + s_a \pp(\frac an) + (3 u_a-s_a t_a ) = 0
$$
so 
$$
\pp'(\frac an ) = -2s_a \pp(\frac an) - 6 u_a +  2s_a t_a = -2s_a^3  + 6s_a t_a- 6 u_a.
$$
\end{proof}

We will now prove several relations on $s_a$ and $t_a$ that will be used later in the paper. They are also of 
independent interest. We start with a simple formula which relates the product of two $r$ functions with another $r$ function and its derivative.
\begin{proposition}\label{rr}
For $a,k\mod n$ such that $a,k,a+k\neq 0\mod n$, we have
$$
r_a(z) r_k(z) = -r'_{a+k}(z) + (s_a+s_k) r_{a+k}(z).
$$
\end{proposition}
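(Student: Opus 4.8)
The plan is to prove the identity by a Liouville-type argument on the torus $\CC/L$, in the same spirit as Proposition~\ref{st}. Set
$$
D(z) = r_a(z)\,r_k(z) + r'_{a+k}(z) - (s_a+s_k)\,r_{a+k}(z),
$$
so that the claim is $D\equiv 0$. First I would record the transformation behaviour. From $r_m(z+1)=r_m(z)$ and $r_m(z+\tau)=\zeta_n^{-m}r_m(z)$ (applied with $m=a,k,a+k$), and the fact that differentiating the latter gives $r'_{a+k}(z+\tau)=\zeta_n^{-(a+k)}r'_{a+k}(z)$, one sees that all three terms of $D$ are $1$-periodic and pick up the factor $\zeta_n^{-(a+k)}$ under $z\mapsto z+\tau$; hence so does $D$. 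Moreover each $r_m$ has its only poles at $L$, so the poles of $D$ are contained in $L$, and by the multiplier relation it suffices to examine $D$ near $z=0$.

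Next I would compare Laurent expansions at $z=0$ using \eqref{Laurent}. Multiplying the expansions of $r_a$ and $r_k$ gives
$$
r_a(z)r_k(z) = \frac 1{z^2}+\frac{s_a+s_k}{z}+(s_as_k+t_a+t_k)+O(z),
$$
while $r_{a+k}(z)=\tfrac1z+s_{a+k}+t_{a+k}z+\cdots$ gives $r'_{a+k}(z)=-\tfrac1{z^2}+t_{a+k}+\cdots$, so that
$$
r'_{a+k}(z)-(s_a+s_k)r_{a+k}(z) = -\frac 1{z^2}-\frac{s_a+s_k}{z}+\big(t_{a+k}-(s_a+s_k)s_{a+k}\big)+O(z).
$$
The $z^{-2}$ and $z^{-1}$ terms therefore cancel in $D$, so $D$ is holomorphic at $z=0$; this cancellation is exactly what the normalizing coefficient $s_a+s_k$ in the statement is designed to achieve. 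Together with the transformation law this shows $D$ has no pole anywhere, i.e. $D$ is entire.

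Finally, an entire $1$-periodic function has a Fourier expansion $D(z)=\sum_{m\in\ZZ}c_m e^{2\pi\ii m z}$, and the relation $D(z+\tau)=\zeta_n^{-(a+k)}D(z)$ forces $c_m\big(e^{2\pi\ii m\tau}-\zeta_n^{-(a+k)}\big)=0$ for every $m$. Since $|e^{2\pi\ii m\tau}|\neq1$ for $m\neq0$ (as $\mathrm{Im}\,\tau>0$) and $\zeta_n^{-(a+k)}\neq1$ because $a+k\not\equiv0\md n$, all $c_m$ vanish, whence $D\equiv0$, which is the asserted identity. The only mildly delicate step is the bookkeeping of the Laurent coefficients so that the double pole and the simple pole cancel simultaneously; the rest is the standard Liouville argument, and the hypotheses $a,k,a+k\not\equiv0\md n$ are used precisely to make $r_a,r_k,r_{a+k},s_a,s_k$ well defined and to guarantee $\zeta_n^{-(a+k)}\neq1$.
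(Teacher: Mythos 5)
Your proof is correct. It shares its computational core with the paper's argument --- in both cases the crucial step is matching the Laurent expansions at $z=0$, where $r_a(z)r_k(z)=\tfrac1{z^2}+\tfrac{s_a+s_k}{z}+\cdots$ forces the coefficients $-1$ and $s_a+s_k$ in front of $r'_{a+k}$ and $r_{a+k}$ --- but the surrounding rigidity statement is packaged differently. The paper argues forward: by Riemann--Roch the space of $L$-quasi-periodic functions with character $-(a+k)$ and pole of order at most $2$ supported at $L$ is two-dimensional, $r_{a+k}$ and $r'_{a+k}$ form a basis, and the coefficients are then read off. You instead form the difference $D$, check that the principal parts cancel so that $D$ is entire, and prove directly via the Fourier expansion in $e^{2\pi\ii z}$ that an entire $1$-periodic function satisfying $D(z+\tau)=\zeta_n^{-(a+k)}D(z)$ with $\zeta_n^{-(a+k)}\neq 1$ must vanish. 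This replaces the Riemann--Roch dimension count by an elementary, self-contained Liouville-type argument (and is precisely where the hypothesis $a+k\not\equiv 0\bmod n$ enters, exactly as you note); the paper's version buys brevity and makes the two-dimensionality of the relevant space explicit, which is reused implicitly elsewhere in Section \ref{RelModFor}. Your bookkeeping of the expansions and of the multiplier system is accurate, so either route is acceptable.
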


\begin{proof}
The product $r_a(z) r_k(z)$ is an almost periodic function for $L$, with $\ZZ/n\ZZ$ character $-(a+k)$ and double pole at $0$. The space of such functions is two-dimensional, by the Riemann-Roch formula. Both $r'_{a+k}$ and $r_{a+k}$ are in this space and they are linearly independent due to their Laurent expansions at $z=0$. Finally, we can find the precise coefficients by looking at the Laurent expansion 
$$
r_a(z) r_k(z)  = \Big(\frac 1z + s_a + \ldots\Big)\Big (\frac 1z + s_k +\ldots\Big) = \frac 1{z^2} + (s_a+s_k)\frac 1z + \ldots.
$$
\end{proof}

\begin{proposition}\label{main}
Let $a,b\md n$ be different nonzero elements of $\ZZ/n\ZZ$. Then for any $k\not\in \{a-b,0,a,-b\}\md n$ the product
$$
(s_{k+b-a}-s_k-s_b+s_a)(s_{k+b}-s_{k-a}-s_b-s_a)
$$
is the same. Specifically, it equals
$
s_b^2-s_a^2+2t_a-2t_b$.
\end{proposition}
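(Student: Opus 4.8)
The plan is to interpret each of the two factors as the value at a rational point of an explicit elliptic function in an auxiliary variable, and then show that the product of those two elliptic functions is constant.

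First I would set $\ell(z):=\theta_z(z,\tau)/\theta(z,\tau)$, so that $s_\alpha=\ell(\tfrac\alpha n)$ for every $\alpha\not\equiv 0\md n$. The transformation properties of $\theta$ recalled above give $\ell(z+1)=\ell(z)$ and $\ell(z+\tau)=\ell(z)-2\pi\ii$; moreover $\ell$ is odd and has a simple pole of residue $1$ at each point of $L=\ZZ+\ZZ\tau$. Differentiating, $\ell'$ is $L$-periodic with principal part $-z^{-2}$ at lattice points, so $\ell'(z)+\pp(z)$ is a holomorphic elliptic function and hence $\ell'(z)=-\pp(z)+c_0$ for a constant $c_0$. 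For a variable $x$ set
\[
G_1(x):=\ell(\tfrac an)+\ell(x+\tfrac{b-a}n)-\ell(x)-\ell(\tfrac bn),\qquad
G_2(x):=\ell(x+\tfrac bn)-\ell(x-\tfrac an)-\ell(\tfrac an)-\ell(\tfrac bn).
\]
In each of $G_1,G_2$ the $x$-dependent terms enter with coefficients $+1$ and $-1$, so the shifts by $-2\pi\ii$ cancel and $G_1,G_2$ are genuine elliptic functions of $x$, each of degree two, with simple poles exactly at $x\in\{0,\tfrac{a-b}n\}$ for $G_1$ and at $x\in\{-\tfrac bn,\tfrac an\}$ for $G_2$. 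Evaluating at $x=\tfrac kn$ gives $G_1(\tfrac kn)=s_{k+b-a}-s_k-s_b+s_a$ and $G_2(\tfrac kn)=s_{k+b}-s_{k-a}-s_b-s_a$, which is legitimate precisely when $k\notin\{a-b,0,a,-b\}\md n$ (the union of the pole sets).

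Next I would verify, using only that $\ell$ is odd, the four vanishings $G_1(\tfrac an)=G_1(-\tfrac bn)=0$ and $G_2(0)=G_2(\tfrac{a-b}n)=0$; each is a one-line substitution in which a pair such as $\ell(\tfrac an)+\ell(-\tfrac an)$ collapses to zero. Since these are exactly the poles of $G_2$, respectively of $G_1$, the product $G_1(x)G_2(x)$ is a holomorphic elliptic function of $x$, hence a constant $\Xi$ independent of $x$ — in particular independent of $k$. To identify $\Xi$, let $x\to0$: there $G_1(x)=-x^{-1}+O(1)$ while $G_2(x)=G_2'(0)\,x+O(x^2)$ with $G_2'(0)=\ell'(\tfrac bn)-\ell'(-\tfrac an)=\pp(\tfrac an)-\pp(\tfrac bn)$ (using $\ell'=-\pp+c_0$ and that $\pp$ is even), so $\Xi=-G_2'(0)=\pp(\tfrac bn)-\pp(\tfrac an)$. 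Proposition \ref{p-st} then gives $\Xi=(s_b^2-2t_b)-(s_a^2-2t_a)=s_b^2-s_a^2+2t_a-2t_b$, which is the asserted value.

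There is no single hard step; the part that needs care is the bookkeeping — pinning down the pole locations of $G_1,G_2$ and confirming the four vanishings on which the cancellation of all poles (hence constancy) rests — together with the degenerate cases built into the hypotheses: $a\not\equiv b$ keeps the two poles of $G_1$ distinct, $a,b\not\equiv0$ keeps $\ell(\tfrac an),\ell(\tfrac bn)$ finite, and when $a+b\equiv0\md n$ one checks directly that $G_2\equiv0$ and that both sides of the claimed identity vanish, so that case is trivial and one may assume $a+b\not\equiv0$. (Alternatively, one could extract the first factor from Proposition \ref{rr} via $r_k r_b-r_{k+b-a}r_a=-(s_{k+b-a}-s_k-s_b+s_a)\,r_{k+b}$ and argue similarly, but the elliptic-function route above seems cleanest.)
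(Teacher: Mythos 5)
Your proof is correct, but it takes a genuinely different route from the paper's. The paper expands the product and regroups it, via a ``rather miraculous rearrangement,'' into four blocks of the form $s_is_j+s_js_l+s_ls_i$ with $i+j+l=0\md n$, then applies Proposition \ref{st} to each block and watches the $k$-dependent $t$'s cancel; the whole argument is a formal consequence of Proposition \ref{st} and the symmetries $s_{-i}=-s_i$, $t_{-i}=t_i$. You instead promote each factor to a degree-two elliptic function $G_1,G_2$ of an auxiliary variable $x$, check that the simple zeros of each sit exactly at the simple poles of the other (your four one-line vanishings, which do use that the two pole sets $\{0,\tfrac{a-b}n\}$ and $\{-\tfrac bn,\tfrac an\}$ are disjoint under the hypotheses on $a,b$), conclude that $G_1G_2$ is constant, and evaluate the constant at $x=0$ via $\ell'=-\pp+c_0$. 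What your approach buys is a conceptual explanation of the $k$-independence and, as a bonus, the direct identification of the constant with $\pp(\tfrac bn)-\pp(\tfrac an)$ --- which is precisely the form the paper needs in Corollary \ref{cusponly} and there obtains by combining Propositions \ref{main} and \ref{p-st} after the fact. What it costs is a little more analytic machinery ($\ell'=-\pp+c_0$, Proposition \ref{p-st}) and the separate (trivial) treatment of the degenerate case $a+b\equiv 0\md n$, which the paper's algebraic computation absorbs automatically. Your bookkeeping of poles, residues, and excluded values of $k$ all checks out.
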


\begin{proof}
After a rather miraculous rearrangement of the terms, the product is equal to
\begin{align*}
s_b^2-s_a^2+
\big( s_{k+b-a}(s_{k+b}-s_a)+ s_a s_{k+b} \big)  - \big(s_{k+b-a}(s_{k-a} + s_b) - s_b s_{k-a} \big)
\\
-\big(s_k(s_{k+b}-s_b)+s_b s_{k+b}\big) +\big( s_k(s_{k-a}+s_a)- s_a s_{k-a}\big) 
\\ 
 =
 s_b^2-s_a^2+
 (t_{k+b-a}+t_{k+b}+t_a)-(t_{k+b-a}+t_{k-a}+t_b) 
 - (t_k+t_{k+b}+t_{b}) 
 \\+ (t_k+t_{k-a}+t_a)
 =s_b^2-s_a^2+2t_a-2t_b.
\end{align*}
Here we used relations $s_i s_j+ s_j s_k+ s_i s_k + t_i +t_j +t_k = 0$ for $i+j+k = 0\mod n$ proved in Proposition \ref{st} and symmetry relations 
$s_{-i} = -s_i$, $t_{-j}= t_j$.
\end{proof}

\begin{corollary}\label{cusponly}
If
$a\neq \pm b \mod n$ and $k\not\in \{a-b,0,a,-b\}\md n$ 
the weight one modular form
$
s_{k+b-a}-s_k-s_b+s_a
$ 
has zeros only at the cusps of $X_1(n)$.
\end{corollary}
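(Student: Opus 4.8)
The plan is to deduce this from Proposition \ref{main} together with the interpretation of $s_a^2-2t_a$ as a value of the Weierstrass function given in Proposition \ref{p-st}. First I would check that the hypotheses $a\neq\pm b\md n$ (so in particular $a$ and $b$ are nonzero and distinct) and $k\notin\{a-b,0,a,-b\}\md n$ guarantee that the four indices $k+b-a$, $k$, $k+b$, $k-a$ are all nonzero mod $n$, so that the forms $s_{k+b-a},s_k,s_{k+b},s_{k-a}$ are all defined and Proposition \ref{main} applies. It then gives
$$(s_{k+b-a}-s_k-s_b+s_a)(s_{k+b}-s_{k-a}-s_b-s_a)=s_b^2-s_a^2+2t_a-2t_b,$$
where the right-hand side is independent of $k$.

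Next I would rewrite the right-hand side using Proposition \ref{p-st}: since $s_a^2-2t_a=\pp(\tfrac an)$ and $s_b^2-2t_b=\pp(\tfrac bn)$, the product above equals $\pp(\tfrac bn)-\pp(\tfrac an)$, the difference of the first Weierstrass coordinates of the points $bt$ and $at$ on the elliptic curve $E_\tau=\CC/(\ZZ+\ZZ\tau)$ with its order-$n$ point $t=\tfrac 1n$. Now comes the main point: at a non-cuspidal point $\tau$ of $X_1(n)$, i.e. one corresponding to a genuine pair $(E_\tau,t)$, the points $at$ and $bt$ are distinct and not negatives of one another, because $a\neq\pm b\md n$; since the degree-two map $\pp\colon E_\tau\to\PP^1$ identifies precisely $P$ with $-P$, this forces $\pp(\tfrac bn)-\pp(\tfrac an)\neq 0$ at $\tau$. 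Hence the product of the two weight-one forms is nonzero at $\tau$, and in particular $s_{k+b-a}-s_k-s_b+s_a$ is nonzero at $\tau$. As $\tau$ ranges over all non-cuspidal points, this shows that the only zeros of $s_{k+b-a}-s_k-s_b+s_a$ on $X_1(n)$ lie among the cusps.

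There is no serious obstacle here; the argument is a short consequence of the two cited propositions. The only points requiring a little care are the bookkeeping of admissible indices (so that all forms involved are defined and Proposition \ref{main} is legitimately invoked) and the standard fact that $\pp$ separates points of $E_\tau$ up to sign. One can also phrase the final step entirely within the ring of modular forms: $s_b^2-s_a^2+2t_a-2t_b$ is a holomorphic weight-two form for $\Gamma_1(n)$ whose value at $(E,t)$ is $\pp(bt)-\pp(at)$, and this vanishes if and only if $bt=\pm at$ on $E$, which never happens for a genuine elliptic curve when $a\neq\pm b\md n$.
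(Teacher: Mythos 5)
Your proof is correct and follows exactly the paper's argument: apply Proposition \ref{main} to identify the product with $s_b^2-s_a^2+2t_a-2t_b$, use Proposition \ref{p-st} to rewrite this as $\pp(\tfrac bn)-\pp(\tfrac an)$, and conclude from the fact that $\pp$ separates points up to sign. The extra bookkeeping on admissible indices is a welcome addition but does not change the route.
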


\begin{proof}
By Propositions \ref{main} and \ref{p-st},
$$
(s_{k+b-a}-s_k-s_b+s_a)(s_{k+b}-s_{k-a}-s_b-s_a) = \pp\Big(\frac bn\Big) - \pp\Big(\frac an\Big).
$$
It remains to observe that $\pp(\frac an) = \pp(\frac bn)$ implies that $\frac an \in\{\pm\frac bn \}\mod L$.
\end{proof}

\begin{corollary}\label{bk}
For $n\geq 7$ and $k\not\in \{-3,-1,0,2\}\md n$, there holds
$$
 \frac {s_4-2s_3+s_2}{s_{k+1}-s_k + s_2-s_3 }
=   \frac{s_{k+3}-s_{k-2}-s_3-s_2}{s_6-s_3-s_2-s_1 }$$
as an equality of modular functions for $\Gamma_1(n)$.
\end{corollary}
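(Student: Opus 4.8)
The plan is to derive this identity by applying Proposition~\ref{main} twice, with the same pair $(a,b)=(2,3)$ but at two different choices of the free parameter. First I would specialize Proposition~\ref{main} to $a=2$, $b=3$. Then $b-a=1$, the forbidden set $\{a-b,0,a,-b\}\md n$ is $\{-1,0,2,-3\}\md n$, and the proposition asserts that for every $k$ outside this set the product
$$(s_{k+1}-s_k-s_3+s_2)(s_{k+3}-s_{k-2}-s_3-s_2)$$
equals the $k$-independent quantity $s_3^2-s_2^2+2t_2-2t_3$. The hypothesis $k\not\in\{-3,-1,0,2\}\md n$ is exactly what is needed here: it places $k$ outside the forbidden set and guarantees that $k,k+1,k-2,k+3$ are all nonzero mod $n$, so every $s$-symbol occurring is defined.

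Second, I would apply the \emph{same} specialization at the specific value $k=3$. This is admissible precisely when $3\notin\{-1,0,2,-3\}\md n$, which holds for $n\ge 7$ since then $n-1\ge 6$ and $n-3\ge 4$; moreover the subscript $k+3=6$ is nonzero mod $n$, again because $n\ge 7$. This yields
$$(s_4-2s_3+s_2)(s_6-s_3-s_2-s_1)=s_3^2-s_2^2+2t_2-2t_3,$$
the same constant as before. Equating the two expressions for $s_3^2-s_2^2+2t_2-2t_3$ gives
$$(s_4-2s_3+s_2)(s_6-s_3-s_2-s_1)=(s_{k+1}-s_k+s_2-s_3)(s_{k+3}-s_{k-2}-s_3-s_2),$$
which is the asserted equality in cross-multiplied form.

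To turn this into the stated ratio of modular functions I still need both denominators to be nonzero as modular forms. For $s_{k+1}-s_k+s_2-s_3$ this is immediate from Corollary~\ref{cusponly} with $a=2$, $b=3$ (note $2\neq\pm 3\md n$ for $n\ge 7$): that form vanishes only at the cusps, hence is not identically zero. For $s_6-s_3-s_2-s_1$, I would observe that, by Propositions~\ref{main} and~\ref{p-st}, the common value $s_3^2-s_2^2+2t_2-2t_3$ equals $\pp(\tfrac{3}{n})-\pp(\tfrac{2}{n})$, which is not identically zero since $\tfrac{3}{n}\not\equiv\pm\tfrac{2}{n}\md L$ when $n\ge 7$; as this nonzero product factors through $s_6-s_3-s_2-s_1$, that factor cannot vanish identically either. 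There is no real obstacle in this argument; the only care required is the bookkeeping of excluded residues, namely checking that both $k=3$ and the $k$ allowed by the hypothesis fall within the range of validity of Proposition~\ref{main} once $n\ge 7$.
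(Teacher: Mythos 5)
Your proposal is correct and is essentially the paper's own argument: specialize Proposition~\ref{main} to $a=2$, $b=3$, note that the constant value of the product is attained at $k=3$ (admissible for $n\ge 7$), and equate. The extra remarks on non-vanishing of the denominators are fine but not part of the paper's one-line proof.
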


\begin{proof}
As a consequence of Proposition \ref{main} for $a=2$ and $b=3$ there holds
$$
(s_{k+1}-s_k-s_3+s_2)(s_{k+3}-s_{k-2}-s_3-s_2) = (s_4 - 2 s_3 + s_2)(s_6-s_3-s_2-s_1)
$$
because the right hand side is the value at $k=3$.
\end{proof}

\begin{corollary}\label{ak1}
For $n\geq 7$ and $k\not\in \{-3,-2,0,1\}\md n$, there holds
$$
 \frac {s_5-2s_3+s_1}{ s_{k+2}-s_k + s_1-s_3 } = 
 \frac{s_{k+3}-s_{k-1}-s_3-s_1}{s_6-s_3-s_2-s_1}
$$
as an equality of modular functions for $\Gamma_1(n)$.
\end{corollary}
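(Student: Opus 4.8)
The plan is to obtain this identity as the direct analogue of Corollary \ref{bk}, now applying Proposition \ref{main} with the pair $(a,b)=(1,3)$ instead of $(2,3)$. First I would record that the hypotheses of Proposition \ref{main} are met for $n\geq 7$: $a=1$ and $b=3$ are distinct nonzero classes, and the excluded set $\{a-b,0,a,-b\}\md n=\{-2,0,1,-3\}\md n$ is exactly $\{-3,-2,0,1\}\md n$, the set ruled out in the statement. Proposition \ref{main} then gives that, for every admissible $k$,
$$
(s_{k+2}-s_k-s_3+s_1)\,(s_{k+3}-s_{k-1}-s_3-s_1)
$$
does not depend on $k$ (it equals $s_3^2-s_1^2+2t_1-2t_3$).

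Next I would specialize to $k=3$, which is permitted because $n\geq 7$ forces $3\notin\{-3,-2,0,1\}\md n$. At $k=3$ the first factor becomes $s_5-s_3-s_3+s_1=s_5-2s_3+s_1$ and the second becomes $s_6-s_2-s_3-s_1=s_6-s_3-s_2-s_1$, so
$$
(s_{k+2}-s_k-s_3+s_1)\,(s_{k+3}-s_{k-1}-s_3-s_1)=(s_5-2s_3+s_1)\,(s_6-s_3-s_2-s_1).
$$
Dividing through by $(s_{k+2}-s_k-s_3+s_1)(s_6-s_3-s_2-s_1)$ and rewriting $s_{k+2}-s_k-s_3+s_1=s_{k+2}-s_k+s_1-s_3$ produces the claimed equality.

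The only thing to check for the quotients to make sense as modular functions is that neither denominator vanishes identically, and this is where the earlier results are used: $s_{k+2}-s_k+s_1-s_3=s_{k+(3-1)}-s_k-s_3+s_1$ has zeros only at the cusps of $X_1(n)$ by Corollary \ref{cusponly} (valid since $1\neq\pm 3\md n$ for $n\geq 7$), hence is not identically zero, and likewise its $k=3$ instance $s_5-2s_3+s_1$; since the product of the two factors equals $s_3^2-s_1^2+2t_1-2t_3=\pp(\tfrac 3n)-\pp(\tfrac 1n)$ by Proposition \ref{p-st}, which is a nonzero modular function for $n\geq 7$, the factor $s_6-s_3-s_2-s_1$ cannot vanish identically either. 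I do not expect any genuine obstacle here: the argument is purely a specialization of Proposition \ref{main}, step for step parallel to the proof of Corollary \ref{bk}.
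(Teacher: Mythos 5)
Your proposal is correct and follows essentially the same route as the paper: apply Proposition \ref{main} with $(a,b)=(1,3)$, identify the right-hand side as the value of the constant product at $k=3$, and divide. The additional verification that the denominators are not identically zero is a reasonable extra precaution but not something the paper spells out.
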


\begin{proof}
As a consequence of Proposition \ref{main} for $a=1$ and $b=3$, for any $k\not\in \{-3,-2,0,1\}\md n$ there holds
$$
(s_{k+2}-s_k - s_3 + s_1 )(s_{k+3}-s_{k-1}-s_3-s_1) = (s_5 - 2 s_3 + s_1)(s_6-s_3-s_2-s_1)
$$
because the right hand side is the value at $k=3$.
\end{proof}

We will now also compute the values of $s_k$ at the cusps of $X_1(n)$.
\smallskip
Every cusp of $X_1(n)$ can be represented by $\frac ac\in\QQ$ and written as $\gamma (\ii\infty)$ for some $\gamma =\left(\hskip-3pt\begin{array}{cc}a&b \\ c&d\end{array}\hskip-3pt\right) \in SL(2,\ZZ)$, so we need to understand
$
\lim_{\tau \to \ii\infty} (c\tau+d)^{-1}s_k(\frac {a\tau+b}{c\tau+d}).
$
\begin{proposition}\label{cuspvalues}
If $\frac {kc}n\not\in\ZZ$, the value of $s_k$ at the cusp $\frac ac$ is
$$
\lim_{\tau \to \ii\infty} (c\tau+d)^{-1}s_k(\frac {a\tau+b}{c\tau+d})
=(2\pi \ii)(\big\{ \frac {kc}n \big\}-\frac 12 )
$$
and if $\frac {kc}n\in\ZZ$ we get 
$$
\lim_{\tau \to \ii\infty} (c\tau+d)^{-1}s_k(\frac {a\tau+b}{c\tau+d})
=(2\pi \ii)\Big (-\frac {1}{2} - \frac {\zeta_n^{kd}}{1-\zeta_n^{kd}} \Big)
=(2\pi \ii) \frac {\zeta_n^{kd}+1}{2(\zeta_n^{kd}-1)}
$$
where $\zeta_n=\ee^{2\pi\ii/n}$.
\end{proposition}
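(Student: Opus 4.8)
The plan is to read off the behaviour of $s_k$ at the cusps from the transformation law of the Jacobi theta function. Recall that $s_k(\tau)=\tfrac{\theta_z(k/n,\tau)}{\theta(k/n,\tau)}$, i.e.\ $s_k$ is the logarithmic $z$-derivative of $\theta(z,\tau)$ evaluated at $z=k/n$. For $\gamma$ as in the statement, the Jacobi transformation formula (see \cite{Chandrasekharan}) has the shape
\[
\theta\Big(\tfrac{z}{c\tau+d},\tfrac{a\tau+b}{c\tau+d}\Big)=\varepsilon(\gamma)(c\tau+d)^{1/2}\ee^{\pi\ii cz^{2}/(c\tau+d)}\,\theta(z,\tau)
\]
for some root of unity $\varepsilon(\gamma)$. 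Differentiating both sides in $z$ and then dividing, the factors $\varepsilon(\gamma)$ and $(c\tau+d)^{1/2}$ cancel and one is left with the clean identity
\[
\frac{1}{c\tau+d}\,\frac{\theta_z\big(\tfrac{z}{c\tau+d},\gamma\tau\big)}{\theta\big(\tfrac{z}{c\tau+d},\gamma\tau\big)}=\frac{2\pi\ii cz}{c\tau+d}+\frac{\theta_z(z,\tau)}{\theta(z,\tau)}.
\]
Specializing $z=\tfrac kn(c\tau+d)$ turns the left side into $\tfrac{1}{c\tau+d}s_k(\gamma\tau)$ and yields
\[
\frac{1}{c\tau+d}\,s_k(\gamma\tau)=\frac{2\pi\ii kc}{n}+\frac{\theta_z(u,\tau)}{\theta(u,\tau)},\qquad u:=\tfrac kn(c\tau+d)=\tfrac{kc}{n}\,\tau+\tfrac{kd}{n},
\]
so everything reduces to the asymptotics of $\tfrac{\theta_z(u,\tau)}{\theta(u,\tau)}$ as $\tau\to\ii\infty$.

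To compute that limit I would bring $u$ into a fundamental strip using quasi-periodicity: from $\theta(z+1,\tau)=\theta(z,\tau)$ and $\theta(z+\tau,\tau)=-\ee^{-2\pi\ii z-\pi\ii\tau}\theta(z,\tau)$ one gets that $\tfrac{\theta_z}{\theta}$ is invariant under $z\mapsto z+1$ and changes by $-2\pi\ii$ under $z\mapsto z+\tau$. Put $m=\lfloor kc/n\rfloor$ and $v=u-m\tau=\{kc/n\}\tau+kd/n$, so $\tfrac{\theta_z(u,\tau)}{\theta(u,\tau)}=\tfrac{\theta_z(v,\tau)}{\theta(v,\tau)}-2\pi\ii m$. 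Taking the logarithmic $z$-derivative of the product \eqref{prodtheta} gives
\[
\frac{\theta_z(v,\tau)}{\theta(v,\tau)}=\pi\cot\pi v-2\pi\ii\sum_{l\geq1}\frac{q^{l}\ee^{2\pi\ii v}}{1-q^{l}\ee^{2\pi\ii v}}+2\pi\ii\sum_{l\geq1}\frac{q^{l}\ee^{-2\pi\ii v}}{1-q^{l}\ee^{-2\pi\ii v}},\qquad q=\ee^{2\pi\ii\tau},
\]
together with $\pi\cot\pi v=\pi\ii\tfrac{\ee^{2\pi\ii v}+1}{\ee^{2\pi\ii v}-1}$. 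If $kc/n\notin\ZZ$, then $0<\{kc/n\}<1$, so $\mathrm{Im}(v)\to+\infty$, hence $\ee^{2\pi\ii v}\to0$ and $\pi\cot\pi v\to-\pi\ii$; moreover $q^{l}\ee^{\pm2\pi\ii v}=\ee^{2\pi\ii(l\pm\{kc/n\})\tau}\ee^{\pm2\pi\ii kd/n}$ with $l\pm\{kc/n\}>0$ for $l\geq1$, so both sums tend to $0$. Thus $\tfrac{1}{c\tau+d}s_k(\gamma\tau)\to\tfrac{2\pi\ii kc}{n}-2\pi\ii m-\pi\ii=2\pi\ii\big(\{kc/n\}-\tfrac12\big)$, which is the first formula. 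If $kc/n=m\in\ZZ$, then $v=kd/n$ is a constant and $\zeta_n^{kd}\neq1$ (since $\gcd(c,d)=1$, were $n$ to divide both $kc$ and $kd$ it would divide $k$, contrary to $n\nmid k$); then the two sums vanish as $q\to0$ while $\pi\cot\pi v\to\pi\ii\tfrac{\zeta_n^{kd}+1}{\zeta_n^{kd}-1}$, and since $\tfrac{2\pi\ii kc}{n}-2\pi\ii m=0$ we obtain $\tfrac{1}{c\tau+d}s_k(\gamma\tau)\to(2\pi\ii)\tfrac{\zeta_n^{kd}+1}{2(\zeta_n^{kd}-1)}$, which equals $(2\pi\ii)\big(-\tfrac12-\tfrac{\zeta_n^{kd}}{1-\zeta_n^{kd}}\big)$ after one line of algebra.

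The argument is essentially bookkeeping, and I do not anticipate a genuine obstacle. The points that need a little care are: pinning down the normalization of the quadratic exponent $\ee^{\pi\ii cz^{2}/(c\tau+d)}$ in the Jacobi formula, since differentiating it in $z$ produces exactly the term $\tfrac{2\pi\ii kc}{n}$ that is responsible for the appearance of $\{kc/n\}$ in the answer; checking that the infinite products and the resulting sums converge uniformly enough near $\ii\infty$ to pass to the limit term by term; and keeping straight the dichotomy $n\mid kc$ versus $n\nmid kc$, together with the elementary observation that in the integral case $\zeta_n^{kd}\neq1$.
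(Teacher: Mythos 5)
Your proof is correct and follows essentially the same route as the paper: both start from the Jacobi transformation law, take the logarithmic $z$-derivative, substitute $z=\tfrac kn(c\tau+d)$, and evaluate the limit from the product expansion of $\theta$. The only cosmetic difference is that you first reduce $u$ to $v=u-\lfloor kc/n\rfloor\tau$ via quasi-periodicity (picking up the $-2\pi\ii m$ explicitly), whereas the paper keeps $u$ as is and lets the terms with $l\pm\tfrac{kc}{n}<0$ each contribute $-1$ in the limit; the bookkeeping is equivalent.
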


\begin{proof}
By \cite{Chandrasekharan}, we have 
\[
\theta(\frac z{c\tau+d},\frac {a\tau+b}{c\tau+d}) = \xi (c\tau+d)^{\frac 12} \ee^{\frac {\pi \ii c z^2}{c\tau+d}}
\theta(z,\tau)
\]
for some constant $\xi$, so we get
\[
\log\theta(\frac z{c\tau+d},\frac {a\tau+b}{c\tau+d}) =f(\tau)+{\frac {\pi \ii c z^2}{c\tau+d}} + \log
\theta(z,\tau).
\]
We substitute $z(c\tau+d)$ for $z$ to get 
\[
\log\theta(z,\frac {a\tau+b}{c\tau+d}) =f(\tau)+ {\pi \ii c z^2}(c\tau+d) + \log
\theta(z({c\tau+d}),\tau).
\]
Differentiation and plugging in $z=\frac kn$ yields
$$
(c\tau+d)^{-1}s_k(\frac {a\tau+b}{c\tau+d})  = 2\pi \ii c \frac kn + (\frac \partial{\partial z}
\log \theta)(\frac kn({c\tau+d}),\tau).
$$
We use the product expansion \eqref{prodtheta} and 
$\sin(\pi z) =  \frac{\ee^{\pi\ii z}-\ee^{-\pi\ii z}} {2i}
=-\frac 1{2\ii}\ee^{-\pi\ii z}(1-\ee^{2\pi\ii z})$ to get  
\[
 \big(\frac \partial{\partial z}
\log \theta)(z,\tau) = 2\pi\ii(-\frac 12 - \sum_{l=0}^\infty \frac {\ee^{2\pi\ii (l\tau + z)}}
 {1-\ee^{2\pi\ii (l\tau + z)}}
 + \sum_{l= 1}^\infty \frac {\ee^{2\pi\ii (l\tau - z)}}
 {1-\ee^{2\pi\ii (l\tau - z)}}
 \big).
\]
Therefore,
\begin{align*}
(c\tau+d)^{-1}s_k(\frac {a\tau+b}{c\tau+d})  = 2\pi \ii \big(\frac { kc}n 
-\frac 12
-
\sum_{l=0}^\infty \frac{q^{l+\frac {kc}n}\zeta_n^{kd}}{1-q^{l+\frac {kc}n}\zeta_n^{kd}}
+
\sum_{l=1}^\infty \frac{q^{l-\frac {kc}n}\zeta_n^{-kd}}{1-q^{l-\frac {kc}n}\zeta_n^{-kd}}
\big)
 \end{align*}
where $\zeta_n=\ee^{2\pi \ii/n}$ and $q^\alpha=\ee^{2\pi\ii \alpha\tau}$. 
We observe that 
\[
\lim_{\tau\to\ii\infty} \frac{q^{l\pm\frac {kc}n}\zeta_n^{\pm kd}}{1-q^{l\pm\frac {kc}n}\zeta_n^{\pm kd}}
=\left\{
\begin{array}{ll}
0, &l\pm  \frac {kc}n  >0,\\
 \frac{\zeta_n^{\pm kd}}{1-\zeta_n^{\pm kd}}
 ,&l\pm \frac {kc}n = 0,\\
-1
 ,&l\pm  \frac {kc}n  < 0,\\
\end{array}
\right.
\]
which implies the claim.
\end{proof}

\section{Realization of $\cT_n$ in terms of modular forms.}\label{RelModFor}
We will now use the results of Section \ref{sec:stu} to compute the  realizations 
explicitly in terms of modular forms on $X_1(n)$. 

\smallskip
We can view the elliptic curve $E$ as $\CC/(\ZZ + \ZZ \tau)$, with the torsion point $t=\frac 1n$.
The homogeneous coordinates of the images of the points $z = \frac in$ for $i=0 ,\ldots, n-1$ under the standard embedding are given by the $n\times 3$ matrix
\begin{align}\label{raw}
\left(
\begin{array}{ccc}
0&1&0\\[.5em]
\pp(\frac 1n) &\pp'(\frac 1n) &1 \\[.5em]
\pp(\frac 2n) &\pp'(\frac 2n) &1\\[.5em]
\pp(\frac 3n) &\pp'(\frac 3n) &1\\[.5em]
\ldots &\ldots &\ldots\\[.5em]
\pp(\frac kn) &\pp'(\frac kn) &1\\[.5em]
\ldots &\ldots &\ldots\\[.5em]
\end{array}
\right).
\end{align}
This matrix is then reduced to the standard form
by right multiplication by $\mathrm {GL}(3,\CC)$ and scaling of the rows
\begin{align}\label{std}
\left(
\begin{array}{ccc}
1& 0 & 0 \\[.5em]
0& 1 & 0 \\[.5em]
0& 0 & 1 \\[.5em]
1& 1 & 1\\[.5em]
\ldots &\ldots &\ldots\\[.5em]
1&a_k&b_k\\[.5em]
\ldots &\ldots &\ldots
\end{array}
\right)
\end{align}
with the exception of the $(n-2)$-nd row where the point lies on the first coordinate line and thus the first coordinate can not be made $1$.

\begin{remark}
The $(n-2)$-nd row of \eqref{std} that corresponds to the point $\frac {n-3}n$ is 
$
(0,1,1)
$,
because it lies on the intersection of the first coordinate line with the line on which the second and third coordinates are equal.
\end{remark}

\begin{proposition}\label{bkform}
There holds
$$
b_k = 
 \frac{s_{k+3}-s_{k-2}-s_3-s_2}{s_6-s_3-s_2-s_1 }
$$
for $k\neq 1,2,(n-3)\mod n$.
\end{proposition}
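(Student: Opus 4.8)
The plan is to compute $b_k$ directly as a ratio of products of values of the Jacobi theta function, and then to check that the proposed ratio of weight-one forms has the same theta-product expression. Write $p_j=(\pp(\tfrac jn):\pp'(\tfrac jn):1)$ for $j\not\equiv 0$ and $p_0=(0:1:0)$ for the rows of \eqref{raw}. The passage to the standard form \eqref{std} is the unique projective transformation sending $p_0,p_1,p_2,p_3$ to the frame \eqref{canbasis}; reading off its three coordinate functionals (each vanishing on the span of two of the $p_i$, normalized so that $p_3\mapsto(1:1:1)$) gives
$$
b_k=\frac{\det(p_0,p_1,p_k)\,\det(p_3,p_1,p_2)}{\det(p_0,p_1,p_3)\,\det(p_k,p_1,p_2)}
$$
for $k\not\equiv 1,2,n-3\bmod n$. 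For $k\equiv 2$ or $n-3$ the determinant $\det(p_k,p_1,p_2)$ vanishes (so $b_k=\infty$, matching the right-hand side), while for $k\equiv 1$ the numerator vanishes as well and $b_1$ is not defined by \eqref{std}; this accounts for the excluded residues in the statement.

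\emph{Evaluating the minors via the functions $r_a$.} Expanding along the first row, $\det(p_0,p_1,p_j)=\pp(\tfrac jn)-\pp(\tfrac1n)=r_1(\tfrac jn)r_{-1}(\tfrac jn)$, by the identity $r_a(z)r_{-a}(z)=\pp(z)-\pp(\tfrac an)$ established in the proof of Proposition \ref{p-st}. For the other minor, the function $z\mapsto\det\big((\pp(z):\pp'(z):1),p_1,p_2\big)$ is elliptic with a triple pole along $L$ and simple zeros at $z=\tfrac1n,\tfrac2n,-\tfrac3n$; since $r_{-1}r_{-2}r_3$ is elliptic ($-1-2+3\equiv0$) with the same divisor, the two agree up to a factor $C(\tau)$ that cancels in $\det(p_3,p_1,p_2)/\det(p_k,p_1,p_2)$. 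Substituting $r_m(\tfrac jn)=\theta(\tfrac{m+j}n)\theta_z(0)/\big(\theta(\tfrac jn)\theta(\tfrac mn)\big)$, using $\theta(-x)=-\theta(x)$ and cancelling all powers of $\theta_z(0)$, one obtains
$$
b_k=\frac{\theta(\tfrac1n)\,\theta(\tfrac6n)\,\theta(\tfrac kn)\,\theta(\tfrac{k+1}n)}{\theta(\tfrac3n)\,\theta(\tfrac4n)\,\theta(\tfrac{k-2}n)\,\theta(\tfrac{k+3}n)}.
$$

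\emph{A theta-product formula for combinations of $s_a$, and conclusion.} Applying Proposition \ref{rr} with its two indices equal to $c$ and $d$ (so $c,d,c+d\not\equiv0$), dividing by $r_{c+d}$, and evaluating at $z=\tfrac qn$ — using $r'_{c+d}(\tfrac qn)/r_{c+d}(\tfrac qn)=s_{c+d+q}-s_q$, which follows from $s_m=\theta_z(\tfrac mn)/\theta(\tfrac mn)$ — gives
$$
s_q+s_c+s_d-s_{c+d+q}=\frac{r_c(\tfrac qn)r_d(\tfrac qn)}{r_{c+d}(\tfrac qn)}=\frac{\theta_z(0)\,\theta(\tfrac{c+d}n)\,\theta(\tfrac{c+q}n)\,\theta(\tfrac{d+q}n)}{\theta(\tfrac qn)\,\theta(\tfrac cn)\,\theta(\tfrac dn)\,\theta(\tfrac{c+d+q}n)}.
$$
Taking $(c,d,q)=(-2,-3,k+3)$ yields $s_{k+3}-s_{k-2}-s_3-s_2$ and $(c,d,q)=(-2,-3,6)$ yields $s_6-s_3-s_2-s_1$, each as an explicit theta product; their quotient, after the common factors $\theta_z(0)$, $\theta(\tfrac5n)$, $\theta(\tfrac2n)$ and the signs drop out, is precisely the displayed expression for $b_k$. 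This proves the stated identity, and since both sides are built from genuine (quasi-)meromorphic functions of $\tau$, it is an identity of modular functions on $X_1(n)$.

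\emph{Main obstacle.} There is no conceptual difficulty beyond choosing the right specialization $(c,d)=(-2,-3)$ in Proposition \ref{rr}; the work is bookkeeping — tracking the many $\theta(\cdot/n)$ factors, the $\theta(-x)=-\theta(x)$ signs, and the $\tau$-dependent constants ($C(\tau)$, powers of $\theta_z(0)$) that must cancel — and a computer algebra check is convenient here. One should also verify that the specializations of Proposition \ref{rr} are admissible, i.e. that $c,d,c+d,q,c+d+q$ are all nonzero mod $n$; for $(c,d,q)=(-2,-3,k+3)$ this holds exactly when $n\geq7$ and $k\not\equiv-3,2$, which together with $k\not\equiv1$ (where $b_1$ is undefined) is precisely the range in the statement.
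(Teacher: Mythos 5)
Your proposal is correct, and the first half coincides with the paper's argument: both start from the $\mathrm{PGL}_3$-invariant ratio of minors $b_k=\det_{01k}\det_{123}/(\det_{12k}\det_{013})$ computed on the raw Weierstrass matrix, and both identify the relevant minors as elliptic functions of $z$ with prescribed divisors expressible through the $r_a$. The endgame differs. The paper uses Proposition \ref{rr} to write $r_{-2}(z)r_3(z)=-r_1'(z)+(s_3-s_2)r_1(z)$, deduces the intermediate formula $b_k=\tfrac{s_4-2s_3+s_2}{s_{k+1}-s_k+s_2-s_3}$, and then converts it to the stated form by invoking Corollary \ref{bk}, i.e. ultimately the quadratic relations $s_as_b+s_bs_c+s_cs_a+t_a+t_b+t_c=0$ of Proposition \ref{main}. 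You instead push everything down to explicit products of theta values: you express $b_k$ as $\theta(\tfrac1n)\theta(\tfrac6n)\theta(\tfrac kn)\theta(\tfrac{k+1}n)\big/\big(\theta(\tfrac3n)\theta(\tfrac4n)\theta(\tfrac{k-2}n)\theta(\tfrac{k+3}n)\big)$, derive from Proposition \ref{rr} the theta-product formula for $s_q+s_c+s_d-s_{c+d+q}$ (your identity $r'_{m}(\tfrac qn)/r_m(\tfrac qn)=s_{m+q}-s_q$ is the same logarithmic-derivative step the paper uses), and match the two products directly. I checked the cancellations and they work out. What the paper's route buys is reuse of the structural relations among the $s_a,t_a$ (Proposition \ref{main}), which are needed elsewhere anyway; what your route buys is a self-contained verification that bypasses the ``miraculous rearrangement'' entirely and, as a byproduct, reproves the instance of Corollary \ref{bk} that is used here. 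Your bookkeeping of the excluded residues $k\equiv 1,2,n-3$ and of the admissibility of the specializations in Proposition \ref{rr} is also sound.
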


Before the proof of Proposition \ref{bkform}, let us note the following
\begin{remark}
Recall from Corollary \ref{cusponly} that the modular form $s_6-s_3-s_2-s_1$ is nonvanishing on $X_1(n)^\circ$.
\end{remark}

\begin{remark}
Note that  Proposition \ref{bkform} implies $b_{n-4}=1$.
\end{remark}

\begin{proof}
Observe that ratios of minors which have each index appearing 
equally many times in the numerator and the denominator, stay invariant under the reduction from \eqref{raw} to \eqref{std}. Therefore,
$$
\frac {\det_{01k}\det_{123}}{\det_{12k}\det_{013}}
$$
must be the same for \eqref{raw} and \eqref{std}. Here it is convenient to index rows of the matrix starting from $0$. The value of this ratio in \eqref{std} is $b_k$, so we get a formula for it. Specifically we get  $b_k=F(\frac kn)$ where
$$
F(z) = \frac {\det_{014}\det_{123}}{\det_{124}\det_{013}}
$$
of the matrix 
\begin{align}\label{3by5}
\left(
\begin{array}{ccc}
0&1&0\\[.5em]
\pp(\frac 1n) & \pp'(\frac 1n) &1\\[.5em]
\pp(\frac 2n) & \pp'(\frac 2n) &1\\[.5em]
\pp(\frac 3n) & \pp'(\frac 3n) &1\\[.5em]
\pp(z) & \pp'(z) &1
\end{array}
\right).
\end{align}

\smallskip
We will now identify the elliptic function $F$ based on the following properties which determine it uniquely.
\begin{itemize}
\item
$F(z)$ has simple poles at $z=\frac 2n$ and $z=-\frac 3n$. It has simple zeros at $z=0$ and $z=-\frac 1n$.
\item
$F(\frac 3n) = 1$.
\end{itemize}
Indeed, $\det_{014}=\pp(z)-\pp(\frac 1n)$ has simple zeros at $\frac 1n$ and $-\frac 1n$ and order two pole at $z=0$. Similarly $\det_{124}$ has simple zeros at $\frac 1n$, $\frac 2n$ and $-\frac 3n$ and order three pole at $z=0$.

\smallskip
By the first property, we can write $F(z)$ up to constant as 
$$
\frac {r_{1}(z)}{r_{-2}(z) r_{3}(z)}
$$
since it is an elliptic function with the same zeros and poles.
By Proposition \ref{rr}
$$
r_{-2}(z) r_{3}(z) 
=- r'_{1}(z) + (-s_2+s_3) r_{1}(z)
$$
so 
$$
F(z) = c \frac 1{- \frac {r'_{1}(z)}{r_{1}(z)}  + (-s_2+s_3) }
= \frac 1{ -\frac {\theta'(z+\frac 1n)}{\theta(z+\frac 1n)} +\frac {\theta'( z)}{\theta(z)}  + (-s_2+s_3) }
$$
So we get
$$
F(\frac kn) =  \frac {-c}{s_{k+1}-s_k + s_2-s_3 }.
$$
We then use the second property to conclude that 
$$
b_k = \frac {s_4-2s_3+s_2}{s_{k+1}-s_k + s_2-s_3 },
$$
and the result follows from Corollary \ref{bk}.
\end{proof}

We will now find the values of $a_k$.
\begin{proposition}\label{akform}
There holds
$$
a_k=
\frac{s_{k+3}-s_{k-1}-s_3-s_1}{s_6-s_3-s_2-s_1}
$$
for $k\neq 1,2,(n-3)$.
\end{proposition}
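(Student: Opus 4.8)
The plan is to mirror the proof of Proposition~\ref{bkform} essentially verbatim, replacing the pair of indices $\{1,2\}$ used there by $\{1,3\}$, or more precisely by whichever combination of coordinate lines isolates the first coordinate of row $k$. As before, I would use the invariance of ratios of $3\times 3$ minors in which every row index occurs equally often in numerator and denominator under the reduction from the raw matrix \eqref{raw} to the standard form \eqref{std}. The entry $a_k$ is the \emph{second} coordinate of the $k$-th row, so I would write $a_k$ as the value $G(\tfrac kn)$ of the ratio
$$
G(z) = \frac{\det_{024}\det_{123}}{\det_{124}\det_{023}}
$$
of the matrix \eqref{3by5} (with the fifth row being $(\pp(z):\pp'(z):1)$), indexing rows from $0$. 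Indeed, $\det_{024}=\pp'(z)-(\text{stuff})\pp(z)-(\text{stuff})$ is, up to the reduction, exactly $a_k$ in \eqref{std}, just as $\det_{014}$ gave $b_k$.

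Next I would identify $G$ as an elliptic function by its divisor and one normalization. The numerator factor $\det_{024}$ vanishes where the point $(\pp(z):\pp'(z):1)$ lies on the line through $p_0=(0:1:0)$ and $p_2=(0:0:1)$, i.e. the line $\{x_1=0\}$; on the cubic this happens exactly at $z=0$ (the image of the identity, which is a flex) — wait, more carefully, $\{x_1=0\}$ meets $E$ at the three points summing to $0$ that include the origin and $\tfrac{n-3}{n}$, so $\det_{024}$ has zeros at $z=0$, $z=-\tfrac 3n$, and the third intersection point, with a pole of appropriate order at $z=0$; and $\det_{124}$ has zeros at $\tfrac 1n,\tfrac 2n,-\tfrac 3n$. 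After cancellation $G(z)$ should have simple zeros at $z=0$ and $z=-\tfrac 1n$ (matching $a_k$ vanishing at $k=0$ and $k=n-1$, the latter because $p_{-1}=(1:s:0)$ has... hmm, actually $p_{-1}=(1:s:0)$ so $a_{-1}=s\neq 0$; I would instead read off the correct zero locus directly from the minor computation, likely zeros at $z=0$ and $z=-\tfrac 1n$ replaced by $z=\tfrac1n$ or similar) and simple poles at $z=\tfrac 2n$ and $z=-\tfrac 3n$, together with the normalization $G(\tfrac 3n)=1$. This pins down $G$ up to a constant as $c\,r_{1}(z)/(r_{-1}(z)r_{3}(z))$, and then Proposition~\ref{rr} applied to $r_{-1}(z)r_{3}(z) = -r'_{2}(z)+(-s_1+s_3)r_2(z)$ gives
$$
G(z) = \frac{-c}{\,s_{k+?}-s_{k}+s_1-s_3\,}\Big|_{z=\frac kn},
$$
so that $a_k = (s_5-2s_3+s_1)/(s_{k+2}-s_k+s_1-s_3)$ after normalizing with $G(\tfrac3n)=1$, and Corollary~\ref{ak1} converts this to the asserted closed form $a_k=(s_{k+3}-s_{k-1}-s_3-s_1)/(s_6-s_3-s_2-s_1)$.

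The main obstacle is bookkeeping: getting the divisor of $G$ exactly right — which coordinate lines pass through which of the standard-form points, hence which $r$-functions appear and with what indices — and checking that the normalization point $z=\tfrac 3n$ is not a zero or pole (so that $G(\tfrac 3n)=1$ is a legitimate normalization), and that the relevant modular forms in the denominators are nonvanishing on $X_1(n)^\circ$ (which follows from Corollary~\ref{cusponly}, exactly as in the remark preceding the proof of Proposition~\ref{bkform}). Once the divisor is correctly determined, the rest is a routine application of Proposition~\ref{rr} and Corollary~\ref{ak1}, entirely parallel to the proof of Proposition~\ref{bkform}; I would also double-check consistency against the explicit small-$n$ realizations (e.g. the $n=7,8,9$ formulas listed in Section~\ref{sec:realiz}) and against the value at $k=n-4$.
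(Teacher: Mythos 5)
Your proposal follows the paper's proof essentially verbatim: the same ratio of minors $G(z)=\det_{024}\det_{123}/(\det_{124}\det_{023})$, identification of $G$ by its divisor and the normalization $G(\tfrac 3n)=1$, Proposition~\ref{rr} applied to $r_{-1}(z)r_3(z)$, and Corollary~\ref{ak1} to reach the stated form. The one bookkeeping item you flagged resolves as follows: the second zero of $G$ is at $z=-\tfrac 2n$ (since $p_{-2}=(1:0:\tfrac{s}{s-1})$ lies on $\{x_2=0\}$), so $G=c\,r_2(z)/(r_{-1}(z)r_3(z))$ rather than $c\,r_1(z)/(r_{-1}(z)r_3(z))$ --- which is exactly consistent with your subsequent use of $r_{-1}r_3=-r_2'+(s_3-s_1)r_2$ and the final formula $a_k=(s_5-2s_3+s_1)/(s_{k+2}-s_k+s_1-s_3)$.
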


\begin{proof}
Let us use a similar approach to find it. We get $a_k = G(\frac kn)$ for 
$$
G(z)= \frac {\det_{024}\det_{123}}{\det_{124}\det_{023}}
$$
for the matrix \eqref{3by5}.
Then $G(\frac 3n) =1$, $G(z)$ has simple zeros at $0$ and $-\frac 2n $ and simple poles at $\frac 1n $ and $ -\frac 3n$.
We get 
$$
G(z) = c\frac {r_{2}(z)}{r_{-1}(z)r_{3}(z)}=
  \frac c{- \frac {r'_{2}(z)}{r_{2}(z)}  + (-s_1+s_3) }
= \frac c{ -\frac {\theta'(z+\frac 2n )}{\theta(z+\frac 2n )} +\frac {\theta'( z)}{\theta(z)}  + (-s_1+s_3) }
$$
and
$$
G(\frac kn) =  \frac {-c}{ s_{k+2}-s_k + s_1-s_3 }
$$
which then leads to
$$
G(\frac kn) =  \frac {s_5-2s_3+s_1}{ s_{k+2}-s_k + s_1-s_3 }
$$
and we use Corollary \ref{ak1}.
\end{proof}

\begin{proposition}\label{all}
For $n\geq 10$ and any $i\neq 0\mod n$, the 
modular function 
$$
\frac {s_i}{s_6-s_3-s_2-s_1}
$$
for $\Gamma_1(n)$ can be written as a constant linear combination of the $a_k$ and $b_k$, $k\in \{0,\dots,n-1\}$.
\end{proposition}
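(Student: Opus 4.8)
The plan is to set $D:=s_6-s_3-s_2-s_1$ and let $S$ denote the $\CC$-span of the modular functions $a_k,b_k$ (over the indices $k\not\equiv 1,2,-3\md n$ for which these are defined); I want to show $\tfrac{s_i}{D}\in S$ for every $i\not\equiv 0\md n$. The whole argument is driven by one observation: by Propositions \ref{bkform} and \ref{akform},
\[
a_k-b_k=\frac{s_{k-2}-s_{k-1}+s_2-s_1}{D},
\]
so, writing $j=k-1$ and abbreviating $c:=\tfrac{s_2-s_1}{D}$, we get
\[
\frac{s_j-s_{j-1}}{D}\equiv c\pmod{S}\qquad\text{for all }j\not\equiv 0,1,n-4\md n.
\]
In words: modulo $S$, all consecutive ratios $\tfrac{s_j-s_{j-1}}{D}$ collapse to the single function $c$, with possible exceptions only at the three ``bad'' indices $0,1,n-4$.

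Next I would telescope around the bad indices. Summing the congruence above over $j=2,\dots,m$ (an empty sum when $m=1$), a range which avoids $\{0,1,n-4\}$ exactly when $1\le m\le n-5$, gives $\tfrac{s_m}{D}\equiv\tfrac{s_1}{D}+(m-1)c\pmod S$, and the symmetry $s_{-m}=-s_m$ then yields the companion congruence $\tfrac{s_m}{D}\equiv-\tfrac{s_1}{D}-(n-m-1)c\pmod S$ valid for $5\le m\le n-1$. Here is where the hypothesis $n\ge 10$ enters: the ranges $[1,n-5]$ and $[5,n-1]$ overlap, so equating the two expressions for $\tfrac{s_m}{D}$ at any $m\in[5,n-5]$ forces $\tfrac{s_1}{D}\equiv-\tfrac{n-2}{2}c\pmod S$ (dividing by $2$ is harmless over $\CC$). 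Substituting this back, both formulas collapse into the single uniform congruence
\[
\frac{s_m}{D}\equiv\frac{2m-n}{2}\,c\pmod S\qquad\text{for all }m\not\equiv 0\md n,
\]
since for $n\ge 10$ the union $[1,n-5]\cup[5,n-1]$ exhausts $\{1,\dots,n-1\}$.

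Finally I would pin down $c$ itself, the one place the normalization of $D$ is used. Plugging $m=6,3,2,1$ into the uniform congruence and forming the combination prescribed by $\tfrac{s_6}{D}-\tfrac{s_3}{D}-\tfrac{s_2}{D}-\tfrac{s_1}{D}=1$, the coefficients of $c$ add up to $n$, so $nc\equiv 1\pmod S$. Since $1=b_{n-4}$ (equivalently $1=b_3$, because $s_6-s_1-s_3-s_2=D$) already lies in $S$, we conclude $nc\in S$, hence $c\in S$, and therefore $\tfrac{s_i}{D}\in S$ for every $i\not\equiv 0\md n$, which is the assertion.

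The main obstacle, and the reason the statement is restricted to $n\ge 10$, is the bookkeeping around the bad indices $0,1,n-4$: the relation coming from $a_k-b_k$ is simply not available there, so the telescoping must be routed around them, and it is precisely the non-emptiness of the overlap $[5,n-5]$ — equivalently $n\ge 10$ — that lets the ``clockwise'' and ``counter-clockwise'' telescopings meet and express $\tfrac{s_1}{D}$ in terms of $c$. Beyond that, everything is a finite linear-algebra computation over $\CC$ using only the relations $s_{-i}=-s_i$ and $s_6-s_3-s_2-s_1=D$.
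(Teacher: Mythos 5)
Your proof is correct and follows essentially the same route as the paper's: both extract the arithmetic-progression relation $s_j\equiv s_{j-1}+(s_2-s_1)$ from the difference of the two families of numerators, telescope, and then pin down the two remaining unknowns using the oddness $s_{-m}=-s_m$ (your reverse telescoping is the paper's relation $s_5+s_{n-5}=0$) together with the $k=3$ relation $s_6-s_1-s_3-s_2\in V$ (your $1=b_3\in S$ combined with $D/D=1$), the nonvanishing of your divisors $2$ and $n$ being exactly the paper's determinant $2n\neq 0$. The only difference is cosmetic: you work with the span of the $a_k,b_k$ themselves rather than of their numerators, and you solve the resulting $2\times2$ linear system in two explicit steps instead of quoting its determinant.
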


\begin{proof}
In view of Propositions \ref{bkform} and \ref{akform},
we simply need to show that linear combinations of
\begin{align}\label{ab}
s_{k+3}-s_{k-2}-s_3-s_2, ~s_{k+3}-s_{k-1}-s_3-s_1
\end{align}
for $k\neq 1,2,(n-3)\mod n$ 
span the space of all $s_i$. Denote by $V$ the span of the above relations. Subtracting one from the other, we get
$$
s_{k-1}=s_{k-2}  +(s_2-s_1) \mod V.
$$
for all $k \neq 1,2,(n-3)\mod n$. This implies
\begin{align}\label{modV}
s_k = s_1 + (k-1)(s_2-s_1) \mod V
\end{align}
for $k\in \{1,2,\ldots,n-5\}$.

\smallskip
Assume $n\geq 10$.
We  use \eqref{modV} and \eqref{ab} for $k=3$ 
to get
$$
s_6 -s_3 - s_1-s_2 = (5-2)(s_2-s_1) - s_1 - s_2 = 2s_2 - 4s_1 = 0 \mod V.
$$
We also use 
$$0=s_5+s_{n-5} = 2s_1 + (n-2)(s_2-s_1) = (n-2) s_2 - (n-4)s_1 \mod V.$$
The determinant 
$4(n-2)-2(n-4)=2n$
is nonzero, which means that $s_1,s_2$ and hence all $s_i$ are in $V$.
\end{proof}


\begin{definition}
Let $ Y_1(n)$ be the open part of $X_1(n)$ on which the modular form
$$
{s_6-s_3-s_2-s_1}
$$
does not vanish. Then we define the morphism
$$
\tilde\psi_n :   Y_1(n) \to M_{n\times 3}(\CC).
$$
by sending $\tau$ to the $n\times 3$ matrix  (with rows counted from $0$) 
\begin{align}\label{matrixs}
\left(
\begin{array}{ccc}
1& 0 & 0 \\[.5em]
0& 1 & 0 \\[.5em]
0& 0 & 1 \\[.5em]
1& 1 & 1\\[.5em]
\ldots &\ldots &\ldots
\\[.5em]
1& \frac{s_{k+3}-s_{k-1}-s_3-s_1}{s_6-s_3-s_2-s_1}&
\frac{s_{k+3}-s_{k-2}-s_3-s_2}{s_6-s_3-s_2-s_1}
\\[.5em]
\ldots &\ldots &\ldots
\end{array}
\right)
\end{align}
and $k=n-3$ row is given by $(0,1,1)$.
\end{definition}

\begin{proposition}\label{embedding}
For $n\geq 10$,
the map $\tilde \psi_n$ is a closed embedding of $Y_1(n)$ into $M_{n\times 3}(\CC).$
\end{proposition}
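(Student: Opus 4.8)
The plan is to prove the stronger, purely algebraic statement that $\tilde\psi_n$ induces a \emph{surjective} ring homomorphism from the coordinate ring $\CC[x_{i,j}]$ of $M_{n\times 3}(\CC)$ onto the ring $\cO(Y_1(n))$ of regular functions on $Y_1(n)$; since $Y_1(n)$ is affine — being the complement, in the smooth projective curve $X_1(n)$, of the nonempty finite zero set of the weight-one form $s:=s_6-s_3-s_2-s_1$ — such surjectivity is exactly the statement that $\tilde\psi_n$ is a closed embedding. The image of this homomorphism is the $\CC$-subalgebra $B\subseteq\cO(Y_1(n))$ generated by the entries of the matrix \eqref{matrixs}, that is, by $1$ and by the functions $a_k=(s_{k+3}-s_{k-1}-s_3-s_1)/s$ and $b_k=(s_{k+3}-s_{k-2}-s_3-s_2)/s$. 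Each $a_k,b_k$ is a $\CC$-linear combination of the functions $s_i/s$, $i\in\ZZ/n\ZZ$, while Proposition \ref{all} gives the converse; hence $B$ equals the $\CC$-subalgebra $B'$ generated by the $\{s_i/s\}_i$, and it suffices to prove $B'=\cO(Y_1(n))$.

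To pin down $\cO(Y_1(n))$ I would pass to the graded ring of modular forms. Let $\cL$ be the weight-one line bundle on $X_1(n)$, so that $H^0(X_1(n),\cL^{\otimes m})=M_m(\Gamma_1(n))$ is the space of weight-$m$ forms, and let $D$ be the effective, cusp-supported divisor of $s\in H^0(X_1(n),\cL)$ (it is cusp-supported by Corollary \ref{cusponly}). Since $Y_1(n)=X_1(n)\setminus\mathrm{supp}(D)$, a rational function on $X_1(n)$ is regular on $Y_1(n)$ precisely when it lies in $H^0(X_1(n),\cO(mD))$ for some $m\ge 0$, and multiplication by $s^m$ identifies $H^0(X_1(n),\cO(mD))$ with $H^0(X_1(n),\cL^{\otimes m})$. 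Therefore
\[
\cO(Y_1(n))=\bigcup_{m\ge 0}\bigl\{\,g\,s^{-m}\ :\ g\in M_m(\Gamma_1(n))\,\bigr\},
\]
whereas, because $s$ is itself a $\CC$-linear combination of the $s_a$,
\[
B'=\bigcup_{m\ge 0}\bigl\{\,g\,s^{-m}\ :\ g\ \text{a homogeneous degree-}m\ \text{polynomial in the }s_a\,\bigr\}.
\]
So $B'=\cO(Y_1(n))$ is equivalent to the assertion that $g\,s^{-m}\in B'$ for every $m$ and every $g\in M_m(\Gamma_1(n))$.

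For this I would invoke the result recalled in Section \ref{sec:stu} (from \cite{BG,BG2}) that every modular form of weight at least $3$ for $\Gamma_1(n)$ is a polynomial in the $s_a$. Given $g\in M_m(\Gamma_1(n))$, choose $m'\ge\max(m,3)$; then $g\,s^{m'-m}\in M_{m'}(\Gamma_1(n))$ is a homogeneous degree-$m'$ polynomial in the $s_a$, so $g\,s^{-m}=(g\,s^{m'-m})\,s^{-m'}\in B'$. Hence $B'=\cO(Y_1(n))$, so $B=\cO(Y_1(n))$, and $\tilde\psi_n$ is a closed embedding for $n\ge 10$.

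The step I expect to require the most care is the transition from Proposition \ref{all}, which only controls a \emph{linear span}, to generation of the whole \emph{ring} $\cO(Y_1(n))$: one has to identify $\cO(Y_1(n))$ with the nested union of the $H^0(X_1(n),\cO(mD))\,s^{-m}$, and one must get around the fact that the $s_a$ do \emph{not} span the weight-two forms. The device of multiplying numerator and denominator by a power of $s$ to push the weight past $3$ is precisely what circumvents this weight-two gap. The hypothesis $n\ge 10$ enters only through Proposition \ref{all} and through $\deg\cL>0$, the latter guaranteeing that $s$ has a zero and hence that $Y_1(n)$ is a proper — therefore affine — open subscheme of $X_1(n)$.
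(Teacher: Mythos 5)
Your proof is correct and follows essentially the same route as the paper, which cites exactly the same two ingredients (Proposition \ref{all} and the fact that weight $\geq 3$ forms for $\Gamma_1(n)$ are polynomials in the $s_a$); the paper's proof is a single sentence, and yours supplies the standard details (affineness of $Y_1(n)$, the identification of $\cO(Y_1(n))$ with $\bigcup_m M_m(\Gamma_1(n))\,s^{-m}$, and the weight-raising trick to get past weight two) that the authors leave implicit.
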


\begin{proof}
This follows from Proposition \ref{all} and the statement that any modular form for $\Gamma_1(n)$ of weight at least three can be written as a polynomial in $s_k$.
\end{proof}

We are now ready to state and prove our main theorem.
\begin{theorem}
For $n\geq 10$, the realization space $\cR(\cT_n)$ is scheme-theoretically isomorphic to the 
open subset of $X_1(n)$ obtained by adding to $X_1(n)^\circ$ (the complement of the cusps) the $(\ZZ/n\ZZ)^*$-orbit of the $\ii\infty$ cusp.
\end{theorem}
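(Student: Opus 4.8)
The plan is to extend $\phi_n$ to a morphism from the asserted open subscheme of $X_1(n)$ into $\cR(\cT_n)$, show that this extension is a closed immersion which is bijective on $\CC$-points, and then invoke Proposition \ref{atmost1} to upgrade it to a scheme isomorphism. Write $Z_0:=X_1(n)^\circ\cup\big((\ZZ/n\ZZ)^*\cdot\ii\infty\big)$ for the target. The starting observation is that on $X_1(n)^\circ$ the content of Propositions \ref{bkform} and \ref{akform} is exactly that $\tilde\psi_n$ coincides with $s\circ\phi_n$, where $s$ is the standard-form section of the quotient $q\colon\kU(\cT_n)\to\cR(\cT_n)$; in particular $\tilde\psi_n(X_1(n)^\circ)\subseteq s(\cR(\cT_n))$, and $X_1(n)^\circ\subseteq Y_1(n)$ by Corollary \ref{cusponly}.

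Next I would handle the cusps. First check $(\ZZ/n\ZZ)^*\cdot\ii\infty\subseteq Y_1(n)$, i.e. that $s_6-s_3-s_2-s_1$ does not vanish there, by substituting the cusp values of $s_k$ from Proposition \ref{cuspvalues} into this expression. Then evaluate the matrix \eqref{matrixs} at a cusp $\gamma(\ii\infty)$ in that orbit, again using Proposition \ref{cuspvalues}: I expect the entries to simplify precisely to the configuration \eqref{pkcusp} with $\zeta^a=\tfrac{w-1}{w+1}$ the quantity attached to $\gamma$ (so $\gcd(a,n)=1$). This both identifies the nodal-cubic realizations \eqref{pkcusp} as images of cusps (as promised in Section \ref{sec:realiz}) and shows $\tilde\psi_n(Z_0)\subseteq s(\cR(\cT_n))$.

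Then comes the formal part. Because $X_1(n)^\circ$ is Zariski dense in $Y_1(n)$ and the vanishing of the non-basis minors of $\cT_n$ is a closed condition that holds on $\tilde\psi_n(X_1(n)^\circ)$, it holds on all of $\tilde\psi_n(Y_1(n))$; since $\tilde\psi_n$ always lands in standard form, $\tilde\psi_n(Y_1(n))$ is contained, scheme-theoretically, in $V(\cT_n)\cap\{\text{standard form}\}$. Hence $Z:=\tilde\psi_n^{-1}(\kU(\cT_n))=Y_1(n)\setminus\tilde\psi_n^{-1}(W(\cT_n))$ is open in $Y_1(n)$, and since $\tilde\psi_n$ is a closed immersion (Proposition \ref{embedding}) it restricts to a closed immersion $\tilde\psi_n(Z)\hookrightarrow s(\cR(\cT_n))$; composing with $s^{-1}$ gives a closed immersion $\iota\colon Z\hookrightarrow\cR(\cT_n)$ extending $\phi_n$. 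I would then argue $Z=Z_0$: by the above $X_1(n)^\circ\subseteq Z$, while by Proposition \ref{set-th} every realization of $\cT_n$ is either a smooth-cubic configuration in $\phi_n(X_1(n)^\circ)$ or one of the configurations \eqref{pkcusp}, so injectivity of $\tilde\psi_n$ forces the cusps lying in $Z$ to be exactly those in the orbit of $\ii\infty$. Finally $\iota$ is surjective on points since its image contains $\phi_n(X_1(n)^\circ)$ together with the configurations \eqref{pkcusp}, which by Proposition \ref{set-th} exhaust $\cR(\cT_n)$. At this point $\iota$ identifies $Z_0$ with $\cR(\cT_n)_{\mathrm{red}}$.

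It remains to see that $\cR(\cT_n)$ is reduced, and this is where Proposition \ref{atmost1} does the work. Since $\iota$ is a closed immersion onto all of $\cR(\cT_n)$ with source the smooth curve $Z_0$, the scheme $\cR(\cT_n)$ has pure dimension one, so $\dim_\CC T_p\cR(\cT_n)\ge 1$ at every closed point $p$, with equality if and only if $\cR(\cT_n)$ is regular — hence reduced — at $p$. Proposition \ref{atmost1} gives $\dim_\CC T_p\cR(\cT_n)\le 1$, so equality holds everywhere, $\cR(\cT_n)=\cR(\cT_n)_{\mathrm{red}}$, and $\iota$ is an isomorphism. The step I expect to be the real obstacle is the explicit cusp computation of the second paragraph — verifying that $\tilde\psi_n$ is defined on the $(\ZZ/n\ZZ)^*$-orbit of $\ii\infty$ and that it takes the values \eqref{pkcusp} there; the rest is bookkeeping with Zariski density and the tangent-space bound.
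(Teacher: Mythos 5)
Your proposal is correct and follows essentially the same route as the paper: extend $\phi_n$ via the matrix \eqref{matrixs}, use Proposition \ref{cuspvalues} to show the $(\ZZ/n\ZZ)^*$-orbit of $\ii\infty$ maps exactly to the configurations \eqref{pkcusp}, invoke Proposition \ref{set-th} to see that these and $\phi_n(X_1(n)^\circ)$ exhaust $\cR(\cT_n)$ while no other cusp lands in it, and finish with Proposition \ref{atmost1} to pass from the set-theoretic to the scheme-theoretic statement. Your spelled-out reduction argument (closed immersion onto $\cR(\cT_n)_{\mathrm{red}}$, then dimension $=1$ forces the tangent-space bound to give regularity) is just a more explicit rendering of the paper's final sentence.
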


\begin{proof}
In view of Propositions \ref{bkform} and \ref{akform}, the morphism $\tilde\phi_n$ extends the map 
$\phi_n:X_1(n)^\circ\to\cR(\cT_n)$ under the identification of the latter as a closed subset of the open subset of the matrices. In particular, for any cusp in $Y_1(n)$, the matrix \eqref{matrixs} satisfies
$\det_{i_1,i_2,i_3}=0$ for $i_1+i_2+i_3=0\md n$.

\smallskip
We will now determine which of the cusps $\frac ac$ of $Y_1(n)$ map to $\cR(\cT_n)$, i.e. which cusps satisfy
$\det_{i_1,i_2,i_3}\neq 0$ for $i_1+i_2+i_3\neq 0\md n$. 

\smallskip
Consider the cusps in the $(\ZZ/n\ZZ)^*$-orbit of the $\ii\infty$ cusp. In this case $c=0\md n$ with $d$ coprime to $n$. Proposition \ref{cuspvalues} implies that the $k$-th row of the matrix \eqref{matrixs} is, for 
$\alpha = \ee^{2\pi\ii \frac dn}$,
\begin{align}
\begin{array}{l}
\Big(1,  \frac{\frac {\alpha^{k+3}+1}{\alpha^{k+3}-1} -\frac {\alpha^{k-1}+1}{\alpha^{k-1}-1}-\frac {\alpha^{3}+1}{\alpha^{3}-1}-\frac {\alpha+1}{\alpha-1}}
{\frac {\alpha^{6}+1}{\alpha^{6}-1} -\frac {\alpha^{3}+1}{\alpha^{3}-1} -\frac {\alpha^{2}+1}{\alpha^{2}-1} -\frac {\alpha+1}{\alpha-1}},
 \frac{\frac {\alpha^{k+3}+1}{\alpha^{k+3}-1} -\frac {\alpha^{k-2}+1}{\alpha^{k-2}-1}-\frac {\alpha^{3}+1}{\alpha^{3}-1}-\frac {\alpha^2+1}{\alpha^2-1}}
{\frac {\alpha^{6}+1}{\alpha^{6}-1} -\frac {\alpha^{3}+1}{\alpha^{3}-1} -\frac {\alpha^{2}+1}{\alpha^{2}-1} -\frac {\alpha+1}{\alpha-1}}
\Big)\\[1em]
=\Big(1,\frac{  (1-\alpha^2)(1+\alpha^3) (1 - \alpha^k) (1 - \alpha^{ k+2})}{(1 - \alpha^5) (1 - \alpha^{
   k - 1}) (1 - \alpha^{k+3})} ,
   \frac{  (1-\alpha+\alpha^2)(1 - \alpha^k) (1 - \alpha^{ k+1})}{(1 + \alpha^2) (1 - \alpha^{
   k - 2}) (1 - \alpha^{k+3})} 
   \Big)
\end{array}
\end{align}
which is exactly the point in \eqref{pkcusp} for the appropriate choice of primitive $n$-th root of $1$.
It then follows from Proposition \ref{set-th} that this is a configuration and that no other cusps map inside the configuration space. (The latter statement can be also seen directly from Proposition \ref{cuspvalues}, we leave the details to the reader).

\smallskip
As a consequence, the realization space $\cR(\cT_n)$ does not contain the special points of \eqref{pkcusp} as isolated points, but rather as part of the smooth image of at open subset of $X_1(n)$. Then the scheme-theoretic equality follows from Proposition \ref{atmost1}.
\end{proof}

\section{Compactifications}\label{Compact}
In this section we discuss two natural compactifications of the configuration space $\cR(\cT_n)$ for $n\geq 10$.

\smallskip
First, observe that if we consider the $\cR(\cT_n)$  as a subset of $M_{n\times 3}(\CC)$, then the closure in the projective space $\PP(M_{n\times 3}(\CC)\oplus \CC)$ is isomorphic to $X_1(n)$. Moreover, we have an embedding given precisely by the weight one forms $s_k(\tau)$. This was initially conjectured by X.R. based on numerical experiments for $10\leq n\leq 42$, and which we can now confirm.

\smallskip
It is also interesting to study the compactification of $\cR(\cT_n)$ in the product of projective planes $(\PP^2)^{n-4}$. Clearly, this means understanding the limits of \eqref{matrixs} as $\tau$ approaches cusps in $X_1(n)$ that are \emph{not} in the $(\ZZ/n\ZZ)^*$ orbit of $\ii\infty$. Such limits exist and are unique by properness of $\PP^2$. If $s_6-s_3-s_2-s_1\neq 0$, then we can get this relatively easily from Proposition \ref{cuspvalues}, but if it is zero, then the calculations are more complicated. In any case, the resulting point configurations will satisfy vanishing of the determinants but not all of the non-vanishings.

\smallskip 
We can state the condition of the cusp $\frac ac$ (with coprime $a$ and $c$) being in the orbit of $\ii\infty$ as $n/\gcd(c,n) = 1$. As such, it is natural to next consider the case $n/\gcd(c,n)=2$, which means that $n=2m$, $c=m\md n$ and $a$ is coprime to $c$.
Then
for even $k$ the value of $s_k$ at this cusp is 
$$
(2\pi\ii) \frac {\zeta^{kd} +1}{2(\zeta^{kd}-1)}
$$
and for odd $k$ the value is $0$. Consequently, $s_6-s_3-s_2-s_1$ takes value
$$
(2\pi\ii) \frac {\zeta^{2d}(1+\zeta^{2d})}{1-\zeta^{6d}}.
$$
For even and odd $k$, the $k$-th row of the matrix \eqref{matrixs} is given by 
$$
\left(
\begin{array}{c}
1,0,
\frac{(1 - \zeta^{6d})(1 - \zeta^{kd})}{ \zeta^{2d} (1 - \zeta^{4d}) (1 - \zeta^{(k-2)d})}
\end{array}
\right)
$$
and
$$
\left(
\begin{array}{c}
1,
\frac{(1 - \zeta^{2d})(1 - \zeta^{6d})  \zeta^{(k-3)d} }{(1 - \zeta^{(k-1)d}) (1 - \zeta^{(k+3)d})},
\frac{(1 - \zeta^{6d}) (1 - \zeta^{(k + 1)d})}{(1 - \zeta^{4d}) (1 - \zeta^{(k + 3)d})}
\end{array}
\right)
$$
respectively. Note that the even $k$ points lie on the line $x_2=0$ and the odd $k$ points lie on the conic 
\begin{align*}
0=(x_3-x_2) x_3 \zeta^{2d} (1+ \zeta^{2d})^2 + x_1^2 (1 +  \zeta^{2d}+  \zeta^{4d})^2 
\\
+ 
 x_1 x_2  \zeta^{2d} (1 +  \zeta^{2d}+  \zeta^{4d}) - x_1 x_3 (1 +  \zeta^{2d})^2 (1 +  \zeta^{2d} +  \zeta^{2d})
 \end{align*}
After a linear change of coordinates, this configuration of points in $\CC\PP^2$ 
can be identified with the B\"or\"oczky example \cite{crowe}, which is important in real arrangements, see \cite{GT}.

\smallskip
We will now consider the next case, that of $n/\gcd(c,n)=3$. In this case we have $n=3m$ and $c$ can be either $m$ or $-m$ modulo $n$. Without loss of generality we may consider the case $c=m$.
Then we have
$$
\lim_{\tau\to\ii\infty}(c\tau+d)^{-1}s_k(\frac{a\tau+b}{c\tau+d})=
\left\{
\begin{array}{ll}
(2\pi\ii)\frac{\zeta^{kd}+1}{2(\zeta^{kd}-1)},&k=0\md 3,\\[.5em]
(2\pi\ii)(-\frac 16),& k=1\md 3,\\[.5em]
(2\pi\ii)(\frac 16),&k=2\md 3.
\end{array}
\right.
$$
Consequently, the $k$-th line of the matrix \eqref{matrixs} is given by 
$$
\left(
\begin{array}{c}
1,
\frac{(1 + \zeta^{3d})  \zeta^{kd} }{ (1 - \zeta^{(k+3)d})},
\frac{(1 + \zeta^{3d})  \zeta^{kd} }{ (1 - \zeta^{(k+3)d})}
\end{array}
\right)
$$
$$
\left(
\begin{array}{c}
1,
\frac{(1 + \zeta^{-3d}) (1 - \zeta^{(k + 2)d})}{ 1 - \zeta^{(k - 1)d}},
1 + \zeta^{3d}
\end{array}
\right)
$$
$$
\left(
\begin{array}{c}
1,
1 + \zeta^{-3d},
\frac{(1 + \zeta^{-3d}) (1 - \zeta^{(k + 1)d})}{ 1 - \zeta^{(k - 2)d}}
\end{array}
\right)
$$
for $k=0,1,2\md 3$ respectively. After a linear change of variables, we can reduce it to
$$
\{(1:0:-\zeta^{3l}),(0:-\zeta^{3l}:1),(-\zeta^{3l}:1:0)\},~0\leq l<m,
$$
which is the Ceva example, (see e.g. \cite{efu}), a prominent example for ball quotient surfaces, see e.g. \cite{Hirzebruch}.

\smallskip
The next case to consider is $n=4m$ and $c=m\md n$. We have
$$
\begin{array}{c}\lim_{\tau\to\ii\infty}(c\tau +d)^{-1}(s_6(\tau)-s_3(\tau)-s_2(\tau)-s_1(\tau))
\\[.5em]=
2\pi\ii\big((\frac 24 -\frac 12) -(\frac 34 -\frac 12)-(\frac 24 -\frac 12)-(\frac 14 -\frac 12)\big) =0
\end{array}
$$
so the previous approach does not work. However, we can use the formulas 
$$a_k = \frac {s_5-2s_3+s_1}{ s_{k+2}-s_k + s_1-s_3 },~b_k= \frac {s_4-2s_3+s_2}{s_{k+1}-s_k + s_2-s_3 }$$
from the proof of Propositions \ref{bkform} and \ref{akform}, at least for some values of $k$. 
The numerator of $a_k$ gives limit
$$
\begin{array}{c}\lim_{\tau\to\ii\infty}(c\tau +d)^{-1}(s_5(\tau)-2s_3(\tau)+s_1(\tau))
\\[.5em]=
2\pi\ii\big((\frac 14 -\frac 12) -2(\frac 34 -\frac 12)+(\frac 14 -\frac 12)\big) =-2\pi\ii.
\end{array}
$$
The numerator of $b_k$ gives the limit $\frac {2\pi\ii}{\zeta^{4d}-1}$, so both are nonzero.
For $k=0\md 4$ and $k=3\md 4$ we get
$(1,a_k,b_k)$ equal $(1,1-\zeta^{-kd},  \frac{1-\zeta^{-kd}}{1-\zeta^{4d}}  )$ and 
$(1,1,\frac{1-\zeta^{(k+1)d}}{1-\zeta^{4d}})$ respectively. For $k=2\md 4$ the denominator of 
$a_k$ equals $\frac {2\pi \ii}{\zeta^{(k+2)d}-1}$, and the denominator of $b_k$ tends to $0$, so the corresponding limit in $\PP^2$ is $(0,0,1)$ for all $k=2\md 4$.  This approach does not work well for the remaining $k=1\md 4$, because in this case both denominators of $a_k$ and $b_k$ go to $0$, so we can only deduce that $p_k$ has zero first coordinate. However, we can still determine these points, since collinearity is preserved in the limit. Indeed, $p_k$ will lie on the line $\overline{p_{n-k}p_0}$. This gives
$$
p_k=(0:1:\frac{1-\zeta^{(1-k)d}}{1 - \zeta^{4d}}).
$$
This is a copy of the Ceva arrangement for $3m$ points and one additional point on the intersection of two lines of the Ceva arrangement, repeated $m$ times.

\smallskip
In the general case, it is relatively easy to see what happens when 
$$
\lim_{\tau\to\ii\infty}(c\tau+d)^{-1}(s_6(\tau)-s_3(\tau)-s_2(\tau)-s_1(\tau))\neq 0.
$$
In this case, the values of $a_k$ and $b_k$ will lie in a finite list, except for when $k\in \{-3,1,2\}\md n/\gcd(c,n)$. Thus we expect it to be again related to the Ceva arrangement. If the above limit is zero, one could use the method of Proposition \ref{cuspvalues} to study the asymptotic behavior of $s_6-s_3-s_2-s_1$ near the cusp, although it does not appear straightforward.

\vspace{3mm}

\noindent \\
Lev Borisov, Rutgers University, 110 Frelinghuysen Rd, Piscataway, NJ 08854, USA

\noindent borisov@math.rutgers.edu

\noindent \\
Xavier Roulleau, Université d'Angers, CNRS, LAREMA, SFR MATHSTIC,
F-49000 Angers, France 

\noindent xavier.roulleau@univ-angers.fr

\end{document}